\newcommand*\samethanks[1][\value{footnote}]{\footnotemark[#1]}
\newtheorem{question}{Question}
\numberwithin{question}{section}
\newtheorem{conjecture}[question]{Conjecture}
\newtheorem{corollary}[question]{Corollary}
\newtheorem{theorem}[question]{Theorem}
\newtheorem{proposition}[question]{Proposition}
\newtheorem{claim}[question]{Claim}
\newtheorem{definition}[question]{Definition}
\newtheorem{remark}[question]{Remark}
\numberwithin{equation}{section}
\title{A threshold for relative hyperbolicity in\\ random 
right-angled Coxeter groups}
\author{Jason Behrstock\thanks{Department of Mathematics, Lehman
College and The Graduate Center, CUNY, New York, USA. Email:
\texttt{jason.behrstock@lehman.cuny.edu}.  Research supported by a
Simons Fellowship.}\and R. Altar
{\c{C}}i{\c{c}}eksiz\thanks{Institutionen f\"or Matematik och
Matematisk Statistik, Ume{\aa} Universitet, Sweden.  Emails:
\texttt{altar.ciceksiz, victor.falgas-ravry@umu.se}.  Research
supported by Swedish Research Council grant VR 2021-03687.} \and
Victor Falgas-Ravry\samethanks}
\begin{document}
\maketitle
\begin{abstract}
We consider the random right-angled Coxeter group $W_{\Gamma}$ whose
presentation graph $\Gamma\sim \mathcal{G}_{n,p}$ is an Erd{\H
o}s--R\'enyi random graph on $n$ vertices with edge probability
$p=p(n)$.  We establish that $p=1/\sqrt{n}$ is a threshold for
relative hyperbolicity of the random group $W_{\Gamma}$.  As a key step in the
proof, we determine the minimal number of pairs of generators that
must commute in a right-angled Coxeter group which is not relatively
hyperbolic, a result which is of independent interest.

We also show that there is an interval of edge probabilities of width
$\Omega(1/\sqrt{n})$ in which the random
right-angled Coxeter group has precisely cubic divergence. This 
interval is between the thresholds for relative 
hyperbolicity (whence exponential divergence) 
and quadratic divergence. Moreover, a simple random walk on any Cayley graph 
of the random right-angled Coxeter group for $p$ in this interval satisfies a central limit theorem.
\end{abstract}

\section{Introduction}
 
In his famous treatise~\cite{Gromov:hyperbolic}, Gromov initiated the 
study of random groups with the statement that ``almost all'' groups 
are hyperbolic. This idea was developed further  in work by Gromov and 
others, who established that for a number of random group models as  one varies a model parameter one sees a threshold at which the typical behaviour changes from one regime into another: on one side of the threshold, random groups outputted by the model are typically infinite and hyperbolic, while on the other side of the threshold they are typically finite, see e.g.\  Gromov~\cite{Gromov:asymptotic}, Ollivier~\cite{Ollivier:hyperrandomgroups} and Ol'shanskii~\cite{Olshanskii:hyperbolic}. The world of these random group models thus splits into two distinct regimes. As we shall see, this stands in some contrast with the model studied in this paper.

The right-angled Coxeter group (or RACG) $W_{\Gamma}$ with
presentation graph $\Gamma=(V,E)$ is the group with generators $V$ and
relations $a^2=\mathrm{id}$ and $ab=ba$ for all $a\in V$ and $ab\in
E$.  We will use the Erd{\H o}s--R\'enyi random graph model, which is
defined as follows: for $n\in \mathbb{N}$ and a sequence of
probabilities $p=p(n)\in [0,1]$, we define a random graph
$\mathcal{G}_{n,p}$ on the vertex set $[n]:=\{1,2, \ldots n\}$ by
including each of the $\binom{n}{2}$ possible edges with probability
$p(n)$, independently at random.  We write $\Gamma\sim
\mathcal{G}_{n,p}$ to denote the fact that $\Gamma$ is a random graph
with the same distribution as $\mathcal{G}_{n,p}$.  To any $\Gamma\sim
\mathcal{G}_{n,p}$ one can associate the RACG $W_{\Gamma}$, thereby
obtaining a model for a \emph{random right-angled Coxeter group}.

The random right-angled Coxeter group was shown by Charney--Farber to be typically infinite and hyperbolic if 
$pn\to 0$, and finite if $(1-p)n^{2}\to 0$ 
\cite[Corollary~2.2]{CharneyFarber}. However, unlike in the types of models 
introduced by Gromov, there is a universe of interesting behavior which  
occurs between these two extremes. In this paper we make progress towards understanding the algebraic and 
geometric properties of such groups via combinatorial and probabilistic tools.

The past two decades have revealed that generalizations of
hyperbolicity provide a powerful framework for studying finitely
generated groups.  One of the strongest types of such ``non-positive
curvature'' is called \emph{relative
hyperbolicity}. This notion was introduced by
Gromov~\cite{Gromov:hyperbolic}, then developed by
Farb~\cite{Farb:RelHyp} and eventually given many equivalent
geometric, topological, and dynamical formulations, see 
Bowditch~\cite{Bowditch:RelHyp}, Dahmani~\cite{Dahmani:thesis},
Dru{\c{t}}u and Sapir~\cite{DrutuSapir:TreeGraded},
Osin~\cite{Osin:RelHyp}, Sisto~\cite{Sisto:metricrelhyp,
Sisto:projrelhyp}, Yaman~\cite{Yaman:RelHyp}, and others.  Relative 
hyperbolicity is both
general enough to include many important classes of groups including
for example all fundamental groups of finite-volume hyperbolic
manifolds and all finitely generated groups with infinitely many ends,
while being sufficiently restrictive to yield powerful implications
concerning geometric, algebraic, and algorithmic properties, see 
Arzhantseva, Minasyan and
Osin~\cite{ArzhantsevaMinasyanOsin:SQ-universality},
Dru{\c{t}}u~\cite{Drutu:RelHyp}, Dru{\c{t}}u and
Sapir~\cite{DrutuSapir:Splitting}, Farb~\cite{Farb:RelHyp}, and many others.

Roughly speaking, a group is \emph{relatively hyperbolic} when all its
`non-hyperbolicity' is confined to certain subgroups that do not
interact with each other in any substantial way.
In this paper, rather than working directly with the definition of
relative hyperbolicity, we will rely on two of the main results of Behrstock,
Hagen and Sisto~\cite[Theorems I and II]{BehrstockHagenSisto:coxeter},
which, in the context of RACGs, 
establishes a necessary and sufficient criterion for relative
hyperbolicity in terms of combinatorial properties of the 
presentation graph.

The main result in this paper is the following, which shows
that a random RACG transitions from being asymptotically almost surely
(a.a.s.) relatively hyperbolic to being a.a.s.\ not relatively
hyperbolic around $p=1/\sqrt{n}$.

\begin{theorem}\label{theorem: main}
Let $p=p(n)$ and $\Gamma\sim \mathcal{G}_{n,p}$. Then the following hold:
\begin{enumerate}[(i)]
	\item if $p\leq \frac{1}{4\sqrt{n\log (n) }}$, then $W_{\Gamma}$ is a.a.s.\ relatively hyperbolic;
	\item if $p\geq \frac{\sqrt{\sqrt{6}-2}}{\sqrt{n}}$, then $W_{\Gamma}$ is a.a.s.\ not relatively hyperbolic.
\end{enumerate}
\end{theorem}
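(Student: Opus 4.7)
The plan is to apply the Behrstock--Hagen--Sisto (BHS) combinatorial characterisation of relative hyperbolicity for RACGs, which translates the question into the presence or absence of certain induced subgraph configurations in $\Gamma$---roughly, whether $\Gamma$ admits a sufficiently large ``CFS-like'' (constructed-from-squares) subgraph formed by chains of induced $4$-cycles glued along edges. Both directions of the theorem are then problems about induced subgraphs in $\mathcal{G}_{n,p}$ at densities $p = \Theta(1/\sqrt n)$.

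For Part (i), the first step is to establish the auxiliary result flagged in the abstract: there is a constant $c>0$ such that whenever $W_\Gamma$ is not relatively hyperbolic, $\Gamma$ has at least $c n^{3/2}$ edges. I would prove this by a structural analysis of BHS obstructions, arguing that any graph supporting such an obstruction must be edge-dense in a quantitative way (the exponent $3/2$ is reminiscent of Zarankiewicz-type bounds forcing abundance of induced $C_4$'s). Once this is in hand, Part (i) follows immediately: for $p \leq 1/(4\sqrt{n\log n})$ we have $\mathbb{E}|E(\Gamma)| = \binom{n}{2} p = O(n^{3/2}/\sqrt{\log n}) = o(n^{3/2})$, so a standard Chernoff bound gives $|E(\Gamma)| < c n^{3/2}$ a.a.s., and the auxiliary result forces $W_\Gamma$ to be relatively hyperbolic.

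For Part (ii), I would exhibit an explicit ``cheap'' BHS-obstruction graph $H$---for example a short chain of induced $4$-cycles sharing edges---whose presence as an induced subgraph of $\Gamma$ certifies non-relative-hyperbolicity of $W_\Gamma$. A first moment computation yields an appearance threshold of the form $p_H = c_H/\sqrt n$, with $c_H$ controlled by the edge-to-vertex density of the densest subgraph of $H$; the specific constant $\sqrt{\sqrt{6}-2}$ should arise from optimising $c_H$ over a natural family of candidate obstructions (with the ``$6$'' plausibly coming from edges in the minimising gadget). A.a.s.\ appearance of an induced copy of $H$ above threshold would then follow from a second-moment argument, or more cleanly from Janson's inequality, bounding the contribution of overlapping copies of $H$ pair by pair.

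The main obstacle I anticipate is structural rather than probabilistic. For Part (i), the heart of the matter is extracting the \emph{sharp} $\Omega(n^{3/2})$ lower bound on commuting pairs in any non-relatively-hyperbolic RACG: the BHS criterion is qualitative, and turning it into a tight extremal estimate requires genuinely new graph-theoretic input. For Part (ii), the delicate step is pinpointing the optimal obstruction $H$ so that its appearance threshold matches the constant $\sqrt{\sqrt{6}-2}$ exactly, and then checking that the second-moment (or Janson) calculation goes through for that particular $H$ without any degeneracy from unusually dense subgraphs.
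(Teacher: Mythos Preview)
Your proposal has genuine gaps in both parts.

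\textbf{Part (i).} The auxiliary extremal result you want---that a non-relatively-hyperbolic $W_\Gamma$ forces $\Omega(n^{3/2})$ edges in $\Gamma$---is simply false. The complete bipartite graph $K_{2,n-2}$ is thick of order $0$ (hence $W_\Gamma$ is not relatively hyperbolic), yet it has only $2n-4$ edges. The paper's extremal theorem (Theorem~\ref{theorem: combinatorial characterisation of thickness}) gives exactly this \emph{linear} bound $e(\Gamma)\ge 2v(\Gamma)-4$, and it is tight. What makes the argument work is not a global edge count but a local one: the paper shows (Proposition~\ref{prop: no dense sets}) that for $p\le \frac{1}{4\sqrt{n\log n}}$, a.a.s.\ no $m$-subset with $m\in[\log n,2\log n]$ carries $2m-4$ edges. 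Then the key structural insight is that thick components are assembled inductively from smaller thick pieces, each of size at most $\log n$; somewhere in this assembly one must pass through a thick subgraph of size in $[\log n,2\log n]$, and the extremal bound then contradicts the local sparsity. Your global first-moment approach cannot see this, because the obstruction to thickness is not global edge scarcity but the absence of moderately dense \emph{small} sets.

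\textbf{Part (ii).} Exhibiting a fixed finite induced subgraph $H$ cannot certify non-relative-hyperbolicity of $W_\Gamma$: by the Behrstock--Hagen--Sisto characterisation, non-relative-hyperbolicity is equivalent to $\Gamma$ being thick, which requires a thick component whose support is \emph{all of} $V(\Gamma)$. No bounded-size $H$ witnesses this. The paper instead imports the square-percolation machinery of Behrstock--Falgas-Ravry--Susse: one runs an exploration process on the square graph $T_1(\Gamma)$, compares it to a branching process whose expected offspring is $\tfrac{\lambda^4}{2}+2\lambda^2+o(1)$ when $p=\lambda/\sqrt{n}$, and observes that this exceeds $1$ precisely when $\lambda>\sqrt{\sqrt{6}-2}$. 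Supercriticality then yields a giant square-component, and a sprinkling argument upgrades this to full support. The constant $\sqrt{\sqrt{6}-2}$ is thus a branching-process criticality threshold, not the density of any particular finite gadget.
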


We note that this threshold 
provides a wide range of RACGs which are both relatively hyperbolic and  
one-ended, since the threshold for one-endedness occurs much lower, at 
 $p=\frac{\log(n)}{n}$, see~\cite[Theorem~3.2]{BehrstockHagenSisto:coxeter}.

A powerful geometric invariant for distinguishing finitely generated
groups is their \emph{divergence}.  This notion was introduced by
Gromov~\cite{Gromov:asymptotic} and refined by
Gersten~\cite{Gersten:divergence,Gersten:divergence3mflds} and,
roughly speaking, is a function of $r$ providing a measure of the
length of a shortest path needed to connect a pair of points at
distance $r$ in a geodesic space while avoiding a ball of radius 
linear in $r$ 
around a third point.  In Euclidean space the divergence
is linear in $r$, while in hyperbolic and relatively hyperbolic spaces
it is exponential in $r$.

Dani and Thomas~\cite{DaniThomas:divcox} gave a construction of RACGs with 
each possible degree of polynomial 
divergence. On the other hand, combining work of Behrstock--Hagen--Sisto~\cite{BehrstockHagenSisto:coxeter} and
Levcovitz~\cite{Levcovitz:DivergenceRACG} it follows that 
whenever a RACG is not relatively hyperbolic, then its divergence must be 
polynomial.

The degree of polynomial divergence
of the Cayley graph of a right-angled Coxeter group can be computed via a geometric
invariant called \emph{thickness}\footnote{In the cases of 
linear and quadratic divergence the characterization is in terms of 
\emph{strong algebraic thickness}, for higher orders this notion 
does not characterize (see \cite{Levcovitz:QIinvariantThickness} for
an obstruction), but a more flexible invariant called \emph{strong thickness}
does characterize.}, which in turn can be
characterized in terms of the combinatorics of the presentation graph
by results of Behrstock, Hagen and
Sisto~\cite{BehrstockHagenSisto:coxeter} and
Levcovitz~\cite{Levcovitz:QIinvariantThickness}.  This combinatorial
characterization results in a notion of \emph{graph theoretic
thickness}, which is how we shall define and apply thickness in this
paper (see Section~\ref{section: def thickness} for the 
definition).

We investigate the order of thickness and divergence in the random
RACG $W_{\Gamma}$, $\Gamma\sim \mathcal{G}_{n,p}$, in the $p$--regime
when $W_{\Gamma}$ is a.a.s.\ not relatively hyperbolic.  The problem
of determining the threshold for thickness and divergence of various
orders was raised by Behrstock, Hagen and Sisto
in~\cite[Question 1]{BehrstockHagenSisto:coxeter}.  In prior work,
Behrstock, Falgas--Ravry and Susse~\cite{behrstock2022square}
established a sharp threshold for thickness of order 1 and quadratic
divergence.

Gromov stated in \cite[$\S 6.B_2.h$]{Gromov:asymptotic} that he 
``expected'' that in non-positively curved spaces the divergence is 
either linear or exponential. In contrast to this, Gersten responded 
by showing  
that the divergence of $3$--manifold groups is either linear, 
quadratic, or exponential  
\cite{Gersten:divergence3mflds}. In his paper, Gersten then highlighted the question of 
whether or not it was possible for a CAT(0) cube complex admitting a 
geometric group action to 
have polynomial divergence of degree $3$ or higher. Several groups 
worked on this, but it took more than a 
decade before examples were constructed ---  this was done 
simultaneously 
in two independent papers of Behrstock and Dru\c{t}u~\cite{BehrstockDrutu:thick2} and Macura~\cite{Macura:polydiv}. The following theorem, in 
sharp contrast to Gromov's expectation, 
establishes that there is a non-trivial 
range in a random group model where the random group is a 
naturally occurring CAT(0) group with cubic divergence. The proof,  
adapting the ideas and arguments of~\cite{behrstock2022square}, 
shows that thickness of order $2$ and cubic divergence in random RACG
occur a.a.s.\ for $p=p(n)$ in some interval of values of width
$\Omega\left(1/\sqrt{n}\right)$ below the sharp threshold for
thickness of order $1$ and quadratic divergence.

\begin{theorem}\label{theorem: threshold for thickness of order 2 below that for order 1}
	There exists an absolute constant $c>0$ such that the following hold:
	\begin{enumerate}[(i)]
		\item 
		for $p=p(n)$ satisfying $\frac{\sqrt{\sqrt{6}-2}-c}{\sqrt{n}}\leq p(n)\leq 1- \Omega(\frac{\log n}{n})$, a.a.s.\ the random graph $\Gamma\sim\mathcal{G}_{n,p}$ is thick of order at most $2$;
		\item for every fixed $\varepsilon>0$ and $p=p(n)\in [\frac{\sqrt{\sqrt{6}-2}-c}{\sqrt{n}},\frac{\sqrt{\sqrt{6}-2}-\varepsilon}{\sqrt{n}}]$, a.a.s.\ the random graph $\Gamma\sim\mathcal{G}_{n,p}$ is thick of order exactly $2$, and the random group $W_{\Gamma}$ has cubic divergence.
	\end{enumerate} 
\end{theorem}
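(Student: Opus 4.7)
The plan is to split the proof into an upper bound of~$2$ on thickness across a wide range of~$p$ (part~(i)) and a matching lower bound in the critical window (part~(ii)). The main tools will be the graph-theoretic characterization of thickness recalled in Section~\ref{section: def thickness}, the sharp threshold of Behrstock, Falgas-Ravry and Susse~\cite{behrstock2022square} for thickness of order~$1$, and the characterization of divergence in terms of thickness coming from Behrstock--Hagen--Sisto~\cite{BehrstockHagenSisto:coxeter} and Levcovitz~\cite{Levcovitz:DivergenceRACG}.

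For part~(i), the goal is to exhibit a family of induced subgraphs of~$\Gamma$ covering $V(\Gamma)$, each thick of order at most~$1$ and chained together via thick intersections. In the upper portion of the range, $p$ already exceeds the sharp threshold of~\cite{behrstock2022square}, so $\Gamma$ is a.a.s.\ thick of order at most~$1$ and the conclusion is immediate; the cap $p \leq 1 - \Omega(\log n/n)$ is imposed to keep the complement dense enough to avoid degenerate cases (in particular, to prevent $\Gamma$ from being a clique). For $p$ just below the order-$1$ threshold, a typical vertex~$v$ may fail to lie in any single order-$0$ subgraph (induced join) which is thickly chained to the rest. The idea is to show, via first- and second-moment computations, that every~$v$ a.a.s.\ lies in some union of two overlapping order-$0$ joins, which is itself an induced order-$1$ thick subgraph containing~$v$. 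A parallel moment argument on overlaps of these order-$1$ blocks should then deliver a chaining by thick intersections, producing a global thick-of-order-$2$ structure.

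Part~(ii) combines the upper bound from~(i) with a lower bound of~$2$: for $p \leq (\sqrt{\sqrt{6}-2}-\varepsilon)/\sqrt{n}$, the complementary direction of the sharp threshold of~\cite{behrstock2022square} ensures that $\Gamma$ is a.a.s.\ not thick of order~$1$ (and, a fortiori, not a join). This yields thick of order exactly~$2$ a.a.s.\ in the stated window. The cubic divergence of $W_\Gamma$ then follows immediately from the characterization above: a RACG whose presentation graph is thick of order exactly $k$ has polynomial divergence of degree exactly $k+1$, so $k=2$ yields cubic divergence.

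The principal obstacle is the covering-and-chaining construction in the subcritical regime of part~(i). Just below the order-$1$ threshold, the expected number of order-$0$ thick subgraphs (induced squares) through a given vertex is only $\Theta(1)$, so we cannot simply use such squares as our cover. Instead, one must analyze configurations built from two overlapping squares through a common vertex, show that every vertex a.a.s.\ participates in such a configuration, and control the pairwise overlaps tightly enough to chain the resulting order-$1$ blocks by thick intersections. Making this uniform over all $n$ vertices will require a delicate second-moment analysis, adapting the squares-based method of~\cite{behrstock2022square} to these larger configurations — this is where I expect the most technical work.
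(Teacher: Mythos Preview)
Your reduction of part~(ii) to part~(i) plus the subcritical side of Theorem~\ref{theorem: threshold order 1 thickness} and the divergence--thickness dictionary is exactly how the paper proceeds, so that half is fine.

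The gap is in part~(i), in the subcritical window. First, a factual slip: at $p=\lambda/\sqrt{n}$ the expected number of induced squares through a given vertex is $\Theta(n^3p^4)=\Theta(n)$, not $\Theta(1)$; covering vertices by order-$0$ or small order-$1$ blocks is not the obstruction. The real difficulty is the step you gloss as ``a parallel moment argument on overlaps of these order-$1$ blocks should then deliver a chaining by thick intersections.'' This is precisely the heart of the matter, and a second-moment computation alone will not give it. What you need is that $T_2(\Gamma)$ has a component of full support; your tiny order-$1$ blocks (pairs of overlapping squares on $6$--$8$ vertices) are abundant, but showing that they all coalesce into a single global chain is a percolation/connectivity statement, not a concentration statement. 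Crucially, your outline gives no mechanism that distinguishes $\lambda$ just below $\sqrt{\sqrt{6}-2}$ from $\lambda$ much smaller, so it does not explain where the absolute constant $c>0$ comes from.

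The paper's route is different and more quantitative. It revisits the exploration process on $T_1(\Gamma)$ from~\cite{behrstock2022square} and augments Step~3: when processing an active non-edge $x_1y_1$, in addition to discovering common neighbours $z$ (which yield new squares and hence $T_1$-offspring), one also tests for quadruples $\{x_2,x_3,y_2,y_3\}$ such that $\{x_1,x_2,x_3\}\sqcup\{y_1,y_2,y_3\}$ induces $K_{3,3}$ minus the edge $x_1y_1$. Such a configuration is thick of order~$1$ with $x_1y_1$ in its latch-set but \emph{not} as a square diagonal, so the pairs $x_ix_j$, $y_iy_j$ join the same $T_2$-component as $x_1y_1$ and can be added as offspring. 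This extra ``bridge-pair'' term contributes $\lambda^8/8+o(1)$ to the expected offspring, pushing the branching mean $\lambda^4/2+2\lambda^2+\lambda^8/8$ above $1$ for all $\lambda$ in an interval $[\sqrt{\sqrt{6}-2}-c,\sqrt{\sqrt{6}-2}]$ for some explicit $c>0$. Supercriticality then yields, via the machinery of~\cite{behrstock2022square} (large components, then sprinkling), a $T_2$-component of full support. If you want to make your covering-and-chaining idea work, you would need to replace the vague ``moment argument on overlaps'' by an argument of this type that actually produces a giant in $T_2$ and pins down~$c$.
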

\noindent As an immediate consequence of Theorem~\ref{theorem: threshold for
thickness of order 2 below that for order 1} and a result of Chawla,
Choi, He and Rafi~\cite[Proposition 1.1]{ChawlaChoiHeRafi:randomdiv} 
(a result about RACGs which are thick of order at least 2)
we have the following:
\begin{corollary}\label{cor: random walks}	
	There exists an absolute constant $c>0$ such that for any fixed $\varepsilon$ with $0<\varepsilon<c$ and $p=p(n)\in
	[\frac{\sqrt{\sqrt{6}-2}-c}{\sqrt{n}},\frac{\sqrt{\sqrt{6}-2}-\varepsilon}{\sqrt{n}}]$,
	a.a.s.\ the random graph $\Gamma\sim\mathcal{G}_{n,p}$ has the 
	property that the simple random walk on any Cayley graph of the right-angled Coxeter group $W_{\Gamma}$ satisfies a central limit theorem and has a normal limit law.
\end{corollary}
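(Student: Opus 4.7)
The plan is straightforward: this corollary is an immediate consequence of Theorem~\ref{theorem: threshold for thickness of order 2 below that for order 1}(ii) combined with the result of Chawla--Choi--He--Rafi cited in the statement. So the ``proof'' amounts to assembling two off-the-shelf ingredients and checking that their hypotheses match up.

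First, let $c>0$ be the absolute constant produced by Theorem~\ref{theorem: threshold for thickness of order 2 below that for order 1}, fix $\varepsilon\in(0,c)$, and let $p=p(n)$ range over the interval $I_{\varepsilon,n}:=\left[\frac{\sqrt{\sqrt{6}-2}-c}{\sqrt{n}},\frac{\sqrt{\sqrt{6}-2}-\varepsilon}{\sqrt{n}}\right]$, which is nonempty of width $(c-\varepsilon)/\sqrt{n}>0$. By Theorem~\ref{theorem: threshold for thickness of order 2 below that for order 1}(ii), the random graph $\Gamma\sim\mathcal{G}_{n,p}$ is a.a.s.\ thick of order exactly $2$, and hence in particular thick of order at least $2$. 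Call this a.a.s.\ event $\mathcal{E}_n$.

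Second, I would invoke~\cite[Proposition~1.1]{ChawlaChoiHeRafi:randomdiv}, a deterministic statement which asserts that whenever the presentation graph $\Gamma$ makes $W_{\Gamma}$ thick of order at least $2$, the simple random walk on any Cayley graph of $W_{\Gamma}$ satisfies a central limit theorem with a normal limit law. Applied pointwise on the event $\mathcal{E}_n$, this gives the corollary, with the same constant $c$ supplied by Theorem~\ref{theorem: threshold for thickness of order 2 below that for order 1}.

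Since all of the work is done by the two cited results, there is no serious mathematical obstacle; the only point to verify is that both results use the same notion of thickness. This is indeed the case, as both rely on the combinatorial characterization of thickness introduced in~\cite{BehrstockHagenSisto:coxeter} and refined in~\cite{Levcovitz:QIinvariantThickness}, so no translation between formalisms is required. The upper endpoint of $I_{\varepsilon,n}$ is chosen strictly below the (larger) threshold from Theorem~\ref{theorem: main}(ii), ensuring that $W_{\Gamma}$ is a.a.s.\ \emph{not} relatively hyperbolic, which is the regime in which the Chawla--Choi--He--Rafi hypothesis is meaningful.
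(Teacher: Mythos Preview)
Your proposal is correct and matches the paper's own treatment: the corollary is stated as an immediate consequence of Theorem~\ref{theorem: threshold for thickness of order 2 below that for order 1}(ii) together with~\cite[Proposition~1.1]{ChawlaChoiHeRafi:randomdiv}, and that is exactly what you do. The closing remark about the upper endpoint and relative hyperbolicity is harmless but unnecessary, since thickness of order at least $2$ is already all that the Chawla--Choi--He--Rafi result requires.
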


A key ingredient in the proof of Theorem~\ref{theorem: main} is the following extremal result on
the minimum number of edges required for a graph to be thick (meaning
thick of some finite order), which is of independent interest.
\begin{theorem}\label{theorem: combinatorial characterisation of thickness}
	Let $\Gamma$ be an $m$--vertex graph that is thick.  Then
	$\vert E(\Gamma)\vert \geq 2m-4$.
\end{theorem}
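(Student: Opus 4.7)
The plan is to prove the bound by induction on $m = |V(\Gamma)|$. The base case $m = 4$ is immediate: the unique thick graph on four vertices is $C_4$, with $|E(C_4)| = 4 = 2m - 4$. In the inductive step ($m \geq 5$), I would locate a vertex $v \in V(\Gamma)$ such that $\Gamma - v$ is still thick. Every vertex in a thick graph lies in some induced join $A \star B$ with $|V(A)|, |V(B)| \geq 2$, and hence has degree at least $2$; thus the inductive hypothesis applied to $\Gamma - v$ (which has $m-1 \geq 4$ vertices) will give $|E(\Gamma)| = |E(\Gamma - v)| + d(v) \geq (2(m-1) - 4) + 2 = 2m - 4$.

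The heart of the proof is therefore the auxiliary lemma: \emph{every thick graph on $m \geq 5$ vertices contains a vertex whose removal preserves thickness}. I would establish this by a secondary induction on the thickness order $n$ of $\Gamma$. For $n = 0$, $\Gamma = A \star B$ with $A, B$ non-complete and $m \geq 5$, so some side, say $A$, has $|V(A)| \geq 3$; fixing a non-edge $\{x,y\} \subseteq V(A)$ and picking any $v \in V(A) \setminus \{x, y\}$ makes $\Gamma - v = (A - v) \star B$ still a join of two non-complete subgraphs, hence thick of order $0$. For $n \geq 1$, I would write $\Gamma = \bigcup_{i=1}^k \Lambda_i$ with the $\Lambda_i$ thick of order $< n$ and with connected intersection graph (where $\Lambda_i \sim \Lambda_j$ whenever $V(\Lambda_i) \cap V(\Lambda_j)$ induces a non-clique), work with a cover minimal in the lexicographic order $(k, \sum_i |V(\Lambda_i)|)$ so that every $\Lambda_i$ has at least one ``exclusive'' vertex appearing in no other piece, and take a leaf $\Lambda_j$ of a spanning tree of the intersection graph. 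Choosing an exclusive vertex $v \in V(\Lambda_j)$ with $\Lambda_j - v$ still thick (via the inner induction when $|V(\Lambda_j)| \geq 5$), the modified family $\{\Lambda_i : i \neq j\} \cup \{\Lambda_j - v\}$ then witnesses the thickness of $\Gamma - v$.

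The hard part will be the case in which the exclusive vertex delivered by the inner induction does not preserve thickness of $\Lambda_j$, or $\Lambda_j$ is itself a $4$-cycle (so $\Lambda_j - v$ only has three vertices and cannot be thick). In such situations, the restricted cover of $\Gamma - v$ fails and one must construct a \emph{new} cover of $\Gamma - v$ unrelated to the original decomposition. For example, two $4$-cycles glued along a non-edge form a thick $6$-vertex graph in which the natural $C_4$-cover admits no leaf with a suitable exclusive vertex, yet removing any degree-$2$ vertex yields $K_{2,3}$, which is thick of order $0$ via an entirely fresh join decomposition. A systematic treatment of this obstacle requires exploiting the structural rigidity imposed by the minimality of the cover and the heavy sharing of vertices—reducing in the extremal configurations to a handful of small base cases that can be verified by hand. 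This delicate analysis of covers under vertex deletion is the most technical ingredient, but it closes the induction and yields the desired edge bound.
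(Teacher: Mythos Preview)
Your proposal has a genuine gap: the auxiliary lemma that every thick graph on $m\geq 5$ vertices admits a vertex whose deletion preserves thickness is never actually proved, and the difficulty you flag in the final paragraph is not a side case but the heart of the matter. Concretely, your inner induction on the thickness order only promises \emph{some} removable vertex $v$ in the leaf piece $\Lambda_j$; there is no mechanism ensuring that this $v$ is exclusive to $\Lambda_j$. If $v$ lies in another piece $\Lambda_i$, deleting it may destroy the thickness of $\Lambda_i$ or the non-clique intersection $V(\Lambda_i)\cap V(\Lambda_{i'})$ for some edge $ii'$ of the tree, so the family $\{\Lambda_i - v\}$ need not witness thickness of $\Gamma - v$. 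Conversely, if you insist on an exclusive vertex, it may be one whose removal kills the thickness of $\Lambda_j$ (e.g.\ if $\Lambda_j\cong K_{2,3}$ and the only exclusive vertex lies on the side of size $2$). Your minimality assumption on $(k,\sum_i\vert V(\Lambda_i)\vert)$ guarantees each piece has an exclusive vertex, but not that it coincides with a removable one. The sentence ``reducing in the extremal configurations to a handful of small base cases that can be verified by hand'' is an assertion, not an argument, and it is not clear that the problematic configurations are finitely many or small.

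The paper sidesteps this entirely by inducting on the thickness order rather than on $m$, and by proving a stronger relative statement: if $\Gamma_2$ is thick of order at most $k$ (or a cherry) and meets $\Gamma_1$ in a non-clique, then $e(\Gamma_1\cup\Gamma_2)\geq e(\Gamma_1)+2\vert V(\Gamma_2)\setminus V(\Gamma_1)\vert$. This union inequality lets one build up a thick-of-order-$(k{+}1)$ graph piece by piece along a spanning tree of its decomposition, accumulating edges as vertices are added, without ever needing to delete a vertex or find a removable one. The base case $k=0$ is a direct bipartite count, and the inductive step splits according to whether some piece already satisfies the union inequality with $\Gamma_1$; when none does, every piece meets $\Gamma_1$ in a clique of size at most $2$, and one walks along the tree until two such cliques are seen, then uses the edge bound on the thick subgraph accumulated so far. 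This ``additive'' bookkeeping is what makes the argument go through cleanly where your ``subtractive'' vertex-deletion strategy stalls.
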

\noindent The lower bound in Theorem~\ref{theorem: combinatorial characterisation of thickness} is best possible, since, e.g., for all $m\geq 4$ the complete bipartite graph $K_{2, m-2}$ is
thick of order $0$ and has exactly $2m-4$ edges.  There are however 
many other extremal examples: consider the graph on $\{1,2, \ldots, 
m\}$ obtained by joining $i,j$ by an edge if and only if $\vert 
\lceil i/2\rceil - \lceil j/2\rceil\vert =1$ (this graph can be 
visualized as a `path of squares', see Figure~\ref{fig 1} for an 
example). Again, it is easy to verify that this graph is thick of 
order $1$ and has exactly $2m-4$ edges.  Further, one can identify 
any pair of non-edges present in two extremal configurations on $m_1$ 
and $m_2$ vertices to obtain a new extremal configuration on $m_1+m_2-2$ vertices --- see Figure~\ref{fig 2} for an example. In this way one can obtain a wide variety of extremal examples, presenting a particular challenge in the proof of Theorem~\ref{theorem: combinatorial characterisation of thickness}. A somewhat surprising feature of our result is that the order of thickness of $\Gamma$ does not change the minimum number of edges required: in general, the price one has to pay for thickness of order $0$ is no higher than the price one has to pay for thickness of higher orders.

Theorem~\ref{theorem: combinatorial characterisation of thickness},
taken together with a result of Behrstock, Hagen and
Sisto~\cite[Theorem I]{BehrstockHagenSisto:coxeter} showing that a
RACG is relatively hyperbolic if and only it is not thick, immediately
implies the following tight lower bound on the minimum number of
commutative relations needed in a RACG to prohibit relative
hyperbolicity.

\begin{corollary}\label{cor: min number relations of not rel 
	hyperbolicity} Consider a graph $\Gamma$ which we take to be the 
	presentation graph of a right-angled Coxeter
group $W_{\Gamma}$. If $\vert E(\Gamma)\vert < 2\vert V(\Gamma)\vert-4,$ then $W_{\Gamma}$ is relatively
hyperbolic.
\end{corollary}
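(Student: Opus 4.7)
The plan is to deduce this corollary as an immediate consequence of Theorem~\ref{theorem: combinatorial characterisation of thickness} together with the characterisation of Behrstock, Hagen and Sisto~\cite[Theorem I]{BehrstockHagenSisto:coxeter}, both of which are already cited in the excerpt. No further graph-theoretic or group-theoretic argument is needed beyond combining these two inputs.

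Concretely, I would argue by contrapositive. Suppose $\Gamma$ is a presentation graph such that $W_{\Gamma}$ is \emph{not} relatively hyperbolic. By~\cite[Theorem I]{BehrstockHagenSisto:coxeter}, a RACG is relatively hyperbolic if and only if its presentation graph is not thick, so the failure of relative hyperbolicity of $W_{\Gamma}$ means precisely that $\Gamma$ is thick (of some finite order). At this point the extremal input from the preceding theorem becomes available: applying Theorem~\ref{theorem: combinatorial characterisation of thickness} to the thick graph $\Gamma$, setting $m:=\vert V(\Gamma)\vert$, yields the bound
\[
\vert E(\Gamma)\vert \geq 2m-4 = 2\vert V(\Gamma)\vert - 4.
\]
This is exactly the negation of the hypothesis $\vert E(\Gamma)\vert < 2\vert V(\Gamma)\vert -4$, so under that hypothesis $W_{\Gamma}$ must be relatively hyperbolic, as claimed.

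There is, strictly speaking, no obstacle in this argument; the genuine work is entirely absorbed into the two cited results. The main thing to be careful about in writing it up is to quote the BHS characterisation in the right direction (``not relatively hyperbolic $\Leftrightarrow$ thick'') and to note that Theorem~\ref{theorem: combinatorial characterisation of thickness} applies uniformly across all finite orders of thickness, so one does not need to separately argue about the order of thickness of $\Gamma$ when deriving the contradiction. The remark following Theorem~\ref{theorem: combinatorial characterisation of thickness}, exhibiting $K_{2,m-2}$ and the ``path of squares'' as extremal configurations with $2m-4$ edges, further shows that the bound in the corollary is tight: one cannot weaken the hypothesis to $\vert E(\Gamma)\vert \le 2\vert V(\Gamma)\vert -4$.
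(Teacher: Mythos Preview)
Your proposal is correct and matches the paper's approach exactly: the paper states this corollary as an immediate consequence of Theorem~\ref{theorem: combinatorial characterisation of thickness} combined with~\cite[Theorem I]{BehrstockHagenSisto:coxeter}, with no further argument given. Your contrapositive write-up and tightness remark are entirely appropriate.
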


\subsection*{Questions and conjectures}
We believe the following sharpening of Theorem~\ref{theorem: main}
might hold. We expect that to prove such a result will require either an entirely new
approach or a substantial quantitative strengthening of our
techniques:
\begin{conjecture}\label{conj: sharp threshold for relative hyperbolicity}
	Let $\Gamma\sim \mathcal{G}_{n,p}$. Then the following hold:
	\begin{enumerate}[(i)]
		\item if $p=p(n)=o\left(\frac{1}{\sqrt{n}}\right)$, then a.a.s.\ the
		right-angled Coxeter group $W_{\Gamma}$ is relatively hyperbolic; 
		\item if
		$p=p(n)=\Omega\left(\frac{1}{\sqrt{n}}\right)$, then a.a.s.\ the
		right-angled Coxeter group $W_{\Gamma}$ is thick of order $O(1)$ and
		has polynomial divergence.
	\end{enumerate}
\end{conjecture}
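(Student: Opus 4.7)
The plan is to attack the conjecture in two parts corresponding to its two claims, each building on ingredients already developed earlier in the paper.

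For part (i), the natural starting point is Theorem~\ref{theorem: combinatorial characterisation of thickness}. The current $\sqrt{\log n}$ gap in Theorem~\ref{theorem: main}(i) almost certainly comes from a crude union bound over possible thick substructures of all sizes. My plan would be to replace this by a sharper argument leveraging the extremal structure of thick graphs: first, refine Theorem~\ref{theorem: combinatorial characterisation of thickness} to a \emph{structural} classification of all $m$-vertex near-extremal thick graphs (say, those with at most $2m-4+o(m)$ edges), showing they all arise by iteratively gluing $K_{2,k}$ and ``path of squares'' primitives along pairs of non-edges. Then enumerate these configurations and apply a Janson-type inequality to show that for $p=o(1/\sqrt{n})$ the random graph $\Gamma\sim\mathcal{G}_{n,p}$ a.a.s.\ contains none whose size exceeds a suitably slowly growing function of $n$. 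Any thick graph on $m$ vertices with $|E|\geq 2m-3$ would be prohibitively expensive to embed at this edge density, so only near-extremal configurations would remain to be ruled out.

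For part (ii), the plan is to extend Theorem~\ref{theorem: threshold for thickness of order 2 below that for order 1}(i) all the way down to $p=c/\sqrt{n}$ for every constant $c>0$, with the thickness order bounded in $n$ (though possibly growing as $c$ decreases). I would proceed inductively: use the existing order-$2$ result as a base to produce large thick-of-order-at-most-$2$ subgraphs near the top of the interval, and then argue that at lower densities such subgraphs themselves link up through shared non-edges into thick-of-order-at-most-$k$ subgraphs, where $k=k(c)$ stays bounded for each fixed $c$. The key probabilistic step is analysing the ``giant component'' of an inductively defined higher-order square-graph, in the spirit of the CFS giant-component argument of~\cite{behrstock2022square} but now applied to an iterated construction.

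The hard part, as the authors themselves anticipate, is the quantitative analysis close to the threshold $p=1/\sqrt{n}$. Standard CFS giant-component arguments rely on the expected number of squares being comfortably above threshold; right at $p=1/\sqrt{n}$, far more delicate tools would be required --- perhaps a multi-type branching-process analysis of the higher-order thick-piece graph, or an entropy-compression argument underpinning the structural classification in (i). I expect that closing the $\sqrt{\log n}$ gap in (i) in particular will demand a genuinely new idea, since that gap reflects the fact that extremality in Theorem~\ref{theorem: combinatorial characterisation of thickness} is achieved for arbitrarily large $m$, so no finite list of forbidden thick configurations can possibly suffice.
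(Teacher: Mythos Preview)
The statement you have been asked to prove is labelled as a \emph{Conjecture} in the paper, not a theorem; the authors explicitly present it as open and say that resolving it ``will require either an entirely new approach or a substantial quantitative strengthening of our techniques.'' There is therefore no proof in the paper to compare your attempt against, and what you have submitted is not a proof but a research programme --- as you yourself acknowledge in your final paragraph.

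On the substance of your plan: your instinct for part~(i), namely that one should move beyond the crude first-moment bound of Proposition~\ref{prop: no dense sets} by exploiting the \emph{structure} of near-extremal thick graphs rather than just their edge count, is reasonable and is indeed the kind of strengthening the authors hint at. However, you do not engage with the specific obstacle the paper singles out immediately after the conjecture: that for $p=o(1/\sqrt{n})$ the random graph might be thick of some order $t=t(n)\to\infty$. Your proposed structural classification of near-extremal thick graphs would need to be \emph{uniform in the order of thickness}, and the examples sketched after Theorem~\ref{theorem: combinatorial characterisation of thickness} (iterated gluings along non-edges) show that the family of extremal configurations is genuinely wild as the order grows, so it is far from clear that a usable enumeration exists. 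For part~(ii), your inductive linking of lower-order thick pieces is again a plausible direction, but the authors' Conjecture~\ref{conj: thickness thresholds} and the surrounding discussion make clear that even the existence of the constants $\lambda_k$ for $k\geq 2$ is open, let alone their rate of decay; your sketch presupposes exactly the kind of quantitative control that is currently missing.

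In short: there is no gap to diagnose because there is no proof, only a sketch of an approach whose hardest steps you yourself flag as unresolved.
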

\noindent A key difficulty presented by this conjecture is the question of whether or not a random graph $\Gamma\sim \mathcal{G}_{n,p}$ with $p=p(n)=o\left(\frac{1}{\sqrt{n}}\right)$ may have thickness of order $t=t(n)$ finite but tending to infinity as $n\rightarrow \infty$.

In an even more challenging direction, we believe that Theorem~\ref{theorem: threshold for thickness of order 2 below that for order 1} can be generalized, and that for any fixed $k\in \mathbb{Z}_{\geq 3}$ there exist intervals of width $\Omega\left(1/\sqrt{n}\right)$ of values of $p(n)$ for which $W_{\Gamma}$ a.a.s.\ exhibits degree $k$ polynomial divergence. Accordingly, we make the following bold conjecture, strengthening Conjecture~\ref{conj: sharp threshold for relative hyperbolicity} part (ii):
\begin{conjecture}\label{conj: thickness thresholds}
	There exists an infinite strictly decreasing sequence of strictly positive real numbers $(\lambda_k)_{k\in \mathbb{N}}$
	\[\lambda_1=\sqrt{\sqrt{6}-2}> \lambda_2>\lambda_3>\cdots >\lambda_k> \ldots \, ,\] such that for every $k \in \mathbb{N}$ and every $\varepsilon>0$ fixed, the following hold:
	\begin{enumerate}[(i)]
		\item	for $p=p(n)\leq  \frac{\lambda_k-\varepsilon}{\sqrt{n}}$, a.a.s.\ the random graph $\Gamma\sim\mathcal{G}_{n,p}$ is not thick of order at most $k$;
		\item 	for $ p=p(n)$ satisfying $\frac{\lambda_k+\varepsilon}{\sqrt{n}}\leq p(n) \leq 1- \Omega(\frac{\log n}{n})$, a.a.s.\ the random graph $\Gamma\sim\mathcal{G}_{n,p}$ is thick of order at most $k$.
	\end{enumerate}	
\end{conjecture}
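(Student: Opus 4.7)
The plan is to define, for each $k \geq 1$,
\[\lambda_k := \inf\{\lambda > 0 : \Gamma \sim \mathcal{G}_{n, \lambda/\sqrt{n}} \text{ is a.a.s.\ thick of order at most } k\},\]
and then to verify three properties: (a) $0 < \lambda_k < \infty$ for all $k$; (b) the constants $\lambda_k$ satisfy the sharp threshold behaviour in parts (i) and (ii); and (c) the sequence is strictly decreasing.

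Finiteness of $\lambda_k$ is immediate from Theorem~\ref{theorem: threshold for thickness of order 2 below that for order 1}, since thickness of order at most $1$ implies thickness of order at most $k$ for any $k \geq 1$, so $\lambda_k \leq \lambda_1$. Strict positivity would follow by pushing Corollary~\ref{cor: min number relations of not rel hyperbolicity}: at $p = \lambda/\sqrt{n}$ with $\lambda$ sufficiently small, the expected number of edges in any candidate thick subgraph drops below $2m-4$, so a.a.s.\ $\Gamma$ is relatively hyperbolic and in particular not thick at any order. For the positive direction (ii), I would adapt the greedy/probabilistic construction of~\cite{behrstock2022square}: identify explicit ``witness'' configurations whose expected count tends to infinity once $p \geq (\lambda_k + \varepsilon)/\sqrt{n}$, control their pairwise intersections by a second moment computation, and verify that their combined structure realises the inductive definition of thickness at order $k$.

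The hard part is the negative direction (i) together with the strict monotonicity (c). The main conceptual obstacle is that the extremal bound $2m-4$ in Theorem~\ref{theorem: combinatorial characterisation of thickness} is \emph{independent} of the order of thickness, so one cannot separate the thresholds by a simple edge count. What seems to be needed is, for each $k$, a more refined combinatorial invariant triggered only by a specific family of higher-order witnesses --- plausibly, ``towers'' of joins of thick subgraphs that recursively encode the inductive definition at depth $k$. The density of such towers in $\mathcal{G}_{n, \lambda/\sqrt{n}}$ should drop sharply below some $\lambda_k > 0$ and should be strictly smaller than the density of depth-$(k-1)$ towers, yielding the monotonicity. The difficulty is twofold: the inductive definition of thickness produces correlations among witnesses at different orders which are awkward to unwind via standard second-moment techniques, and ruling out thickness of order at most $k$ requires a union bound over \emph{all} candidate decompositions of $\Gamma$ --- a much more global condition than the non-appearance of a single forbidden substructure. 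Finding the correct family of witnesses, together with an order-sensitive extremal lemma generalising Theorem~\ref{theorem: combinatorial characterisation of thickness}, appears to be the essential new ingredient required, and is the reason the authors describe the conjecture as bold.
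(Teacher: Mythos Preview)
The statement you are attempting is Conjecture~\ref{conj: thickness thresholds}, which the paper explicitly leaves open: immediately after stating it, the authors remark that ``showing the existence of such constants $\lambda_k$ \ldots seems however an extremely difficult problem even in the case $k=2$''. There is therefore no proof in the paper to compare against, and your proposal is best read as an outline of the obstacles rather than a proof --- indeed, your final paragraph already concedes this.

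That said, one step in your sketch is genuinely wrong rather than merely incomplete. You claim strict positivity of $\lambda_k$ ``would follow by pushing Corollary~\ref{cor: min number relations of not rel hyperbolicity}: at $p=\lambda/\sqrt{n}$ with $\lambda$ sufficiently small, the expected number of edges in any candidate thick subgraph drops below $2m-4$''. This does not work. The argument behind Proposition~\ref{prop: no dense sets} shows that a.a.s.\ no $m$-set with $m\in[\log n,2\log n]$ carries $2m-4$ edges only when $p\leq \frac{1}{4\sqrt{n\log n}}$; at $p=\lambda/\sqrt{n}$ for any fixed $\lambda>0$ the random graph contains, for instance, $\Theta(n^2)$ induced $4$-cycles, each of which is a $4$-vertex set with exactly $2\cdot 4-4$ edges. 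So the density obstruction simply is not present at scale $\lambda/\sqrt{n}$, and the paper's extremal bound cannot be ``pushed'' to give $\lambda_k>0$. Note also that your definition of $\lambda_k$ as an infimum does not by itself yield the two-sided statement in (i)--(ii): since thickness of order at most $k$ is non-monotone in $p$ (Remark~\ref{remark: nonmonotonicity}), knowing the infimum does not tell you what happens just below it.

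Your diagnosis of the real difficulty --- that Theorem~\ref{theorem: combinatorial characterisation of thickness} gives an edge bound independent of the order of thickness, so a genuinely order-sensitive invariant is required for part (i) and for strict monotonicity --- is accurate and matches the paper's own discussion of why the conjecture is hard.
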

Showing the existence of such constants $\lambda_k$ (let alone
determining their values!)  seems however an extremely difficult problem even in
the case $k=2$ (although Theorem~\ref{theorem: threshold for thickness of 
order 2 below that for order 1} is evidence in this direction).  Indeed, it is not even clear what the right rate of
decay for the sequence $\lambda_k$ should be: does it tend to $0$, and
if so at what speed? As discussed in Remark~\ref{remark: 
nonmonotonicity} the property of being thick of 
order~$k$ is not monotonic, which is another difficulty in 
approaching this conjecture.

Finally, as already discussed by Behrstock, Falgas--Ravry and Susse
in~\cite{behrstock2022square}, it could be fruitful to investigate
similar questions to the ones considered in this paper when the presentation
graph $\Gamma$ is taken from a different random graph distribution
than that given by the Erd{\H o}s--R\'enyi model.  Random $d$-regular
graphs or random graph models with clustering such as random
intersection graphs or random geometric graphs could be interesting
models to study in this way, as the typical geometric properties of
the resulting RACG may differ in novel ways from those established in this paper.

\subsection*{Related results}
The study of random right-angled Coxeter groups and their geometric
and cohomological properties was initiated in papers of Charney and
Farber~\cite{CharneyFarber}, Davis and Kahle~\cite{DavisKahle} and
Behrstock, Hagen and Sisto~\cite{BehrstockHagenSisto:coxeter} amongst
others; these results followed Costa and Farber's introduction of a 
related model of random right-angled Artin groups~\cite{CostaFarber}.  Theorem~\ref{theorem: main} part (i) of this paper represents
a dramatic improvement on~\cite[Theorem
III]{BehrstockHagenSisto:coxeter}, where it was established that for
$p(n)=o(n^{-5/6})$, a.a.s.\ the random RACG $W_{\Gamma}$, $\Gamma\sim
\mathcal{G}_{n,p}$, is relatively hyperbolic: this represents significant
progress towards the completion of a systematic picture of the
geometric properties of random RACGs proposed in~\cite[Figure
4]{BehrstockHagenSisto:coxeter}.

Improving on the earlier work of~\cite{BehrstockHagenSisto:coxeter},
Behrstock, Falgas--Ravry, Hagen and Susse~\cite{behrstock2018global}
established a threshold result for thickness of order $1$ and
quadratic divergence of the random RACG $W_{\Gamma}$,
$\Gamma\sim\mathcal{G}_{n,p}$ around $p=1/\sqrt{n}$.  More precisely,
they showed that for $p=p(n)\leq 1/\left(\log(n)\sqrt{n}\right)$,
$W_{\Gamma}$ exhibits a.a.s.\ at least cubic divergence, while for
$p=p(n)\geq 5\sqrt{\log n}/\sqrt{n}$ it has a.a.s.\ at most quadratic
divergence.  This result was later sharpened by Behrstock,
Falgas--Ravry and Susse, who determined
in~\cite[Theorem~1.6]{behrstock2022square} the precise location of the
transition between cubic and quadratic divergence with the following 
result.
	\begin{theorem}[Behrstock, Falgas--Ravry and Susse]\label{theorem: threshold order 1 thickness}
		Let $c>0$ be fixed, and let $\Gamma\sim \mathcal{G}_{n,p}$. The following hold:
		\begin{enumerate}[(i)]
			\item if $c<\sqrt{\sqrt{6}-2}$ and $p=p(n)\leq c/\sqrt{n}$, then a.a.s.\ $W_{\Gamma}$ is not algebraically thick of order $1$ and has at least cubic divergence;
			\item if $c>\sqrt{\sqrt{6}-2}$ and $c/\sqrt{n}\leq p(n)$, then a.a.s.\ $W_{\Gamma}$ is algebraically thick of order at most $1$ and has at most quadratic divergence. Moreover if there is some constant $\varepsilon>0$ such that $p(n)\leq 1-(1+\varepsilon)\frac{\log n}{n}$ then in fact $W_{\Gamma}$ is algebraically thick of order exactly $1$ and has quadratic divergence.
		\end{enumerate}
	\end{theorem}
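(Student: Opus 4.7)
The plan is to reduce the geometric/group-theoretic statement to a sharp threshold question about a purely combinatorial property of $\mathcal{G}_{n,p}$, and then carry out a branching-process / sprinkling argument to pin down the constant $\sqrt{\sqrt{6}-2}$. Specifically, I would invoke the Dani--Thomas / Behrstock--Hagen--Sisto characterization which says that a RACG $W_\Gamma$ is algebraically thick of order at most~$1$ with quadratic divergence if and only if $\Gamma$ satisfies the CFS (``constructed from squares'') property; otherwise, combined with Levcovitz's results, $W_\Gamma$ has at least cubic divergence. The task thus becomes: show that $c=\sqrt{\sqrt{6}-2}$ is a sharp threshold constant for CFS in $\mathcal{G}_{n,c/\sqrt{n}}$. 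To that end, define the auxiliary \emph{square graph} $\Gamma^{\square}$ whose vertices are the induced $4$-cycles of $\Gamma$ and whose edges join pairs of $4$-cycles sharing a common diagonal (a non-edge of $\Gamma$ lying in both). Then CFS is equivalent to $\Gamma^{\square}$ having a connected component whose support (union of vertex sets of its $4$-cycles) equals $V(\Gamma)$.

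For part (ii), at $p=c/\sqrt{n}$ with $c>\sqrt{\sqrt{6}-2}$, I would use two-round exposure: write $p\approx p_1+p_2$ with $p_1$ slightly above the threshold and $p_2$ a small ``sprinkling'' contribution. Using first and second moment estimates on the number of induced $4$-cycles ($\Theta(c^4n^2)$ in expectation) and on their adjacency in $\Gamma^{\square}$, I would compare the exploration of components of $\Gamma^{\square}$ to a branching process whose offspring distribution counts $4$-cycles sharing a diagonal with a given $4$-cycle. The critical value $\sqrt{\sqrt{6}-2}$ would emerge as the unique positive root of the polynomial fixed-point equation governing extinction of this branching process, which when solved explicitly involves a quadratic in $c^4$ whose relevant root is $c^2=\sqrt{6}-2$. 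Above this value a giant component with support on $(1-o(1))n$ vertices emerges; the sprinkling step then catches the remaining vertices by producing, with high probability, at least one new $4$-cycle through each, diagonally connected into the giant. The upper bound $p\le 1-\Omega(\log n/n)$ is needed to keep enough \emph{non-edges} around so that $4$-cycles remain induced and diagonals remain available.

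For part (i), at $p=c/\sqrt{n}$ with $c<\sqrt{\sqrt{6}-2}$, I would argue instead that the corresponding branching process is subcritical and typically dies out, so that the components of $\Gamma^{\square}$ have bounded expected support. A first-moment count would show that the number of vertices not lying in any CFS-component of linear support is of order $n$ (or at least $\omega(1)$), obstructing CFS for $\Gamma$ and hence, by the characterization above, forcing $W_\Gamma$ to be not algebraically thick of order~$1$ and to have at least cubic divergence.

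The main obstacle is not the setup but the precise threshold constant. Naive connectivity/first-moment arguments yield only a $\Theta(1/\sqrt n)$ threshold; obtaining the exact value $\sqrt{\sqrt{6}-2}$ requires (a)~a careful two-moment analysis for the size and support of the giant component of $\Gamma^{\square}$ uniform in $c$ near criticality, and (b)~a rigorous coupling of the exploration of $\Gamma^{\square}$ with a Galton--Watson tree controlling the degeneration of induced $4$-cycles into triangles, $K_4$'s and other configurations that spoil the induced condition. Closing the gap between the subcritical lower bound and the supercritical upper bound at exactly the same constant $c=\sqrt{\sqrt{6}-2}$ is where the bulk of the technical work lies, and is what distinguishes this sharp threshold theorem from the earlier non-sharp result of~\cite{behrstock2018global}.
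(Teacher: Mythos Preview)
This theorem is not proved in the present paper: it is quoted from~\cite[Theorem~1.6]{behrstock2022square} as background. The paper does, however, summarise the argument of~\cite{behrstock2022square} in Section~\ref{section: order 2 thickness} when adapting it to order~$2$ thickness, so one can compare your outline against that summary.

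Your high-level strategy matches: reduce to a combinatorial property of the square graph, analyse an exploration process via comparison with a branching process to locate the constant, then sprinkle to cover all of $V(\Gamma)$. A few details diverge from the actual proof:
\begin{itemize}
\item The constant $\sqrt{\sqrt{6}-2}$ is \emph{not} obtained from an extinction fixed-point equation but from the simpler criticality condition that the mean offspring equals~$1$. As the paper recalls, the offspring variable stochastically dominates $\binom{Z'+2}{2}-1$ with $Z'\sim\mathrm{Binom}(n-o(n),p^2-o(p^2))$, giving expected offspring $\frac{\lambda^4}{2}+2\lambda^2+o(1)$ at $p=\lambda/\sqrt{n}$; setting this equal to~$1$ yields a quadratic in $\lambda^2$ (not in $c^4$) with positive root $\lambda^2=\sqrt{6}-2$.
\item The sprinkling in~\cite{behrstock2022square} is a \emph{vertex}-sprinkling (partitioning $V(\Gamma)$ and revealing crossing edges), handled via Janson's inequality and partition arguments, rather than the two-round edge exposure $p\approx p_1+p_2$ you describe.
\item You work with the auxiliary graph whose vertices are induced $4$-cycles; the paper (and~\cite{behrstock2022square}) instead use $T_1(\Gamma)$ with vertex set $E(\overline{\Gamma})$. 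These carry the same information (see the remark after Definition~\ref{def:square graph}), so this is a matter of convention.
\end{itemize}
Your sketch of part~(i) is thinner than what is actually needed; the subcritical side in~\cite{behrstock2022square} requires more than a first-moment bound on supports, but since the paper does not reproduce that argument either, there is nothing further to compare here.
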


%
%

More recently, Susse~\cite{susse2023morse} studied Morse subgroups and
Morse boundaries of random RACGs. In separate
work~\cite{BehrstockCiceksizFalgasRavry:connectivity}, the authors of
the present paper establish a threshold for connectivity of the square
graph of a random graph, a result that settles a conjecture of Susse
and has geometric applications to the study of cubical coarse rigidity
in random RACG.

Finally, it would be remiss of us to close this section without mentioning the closely related topic of \emph{clique percolation}, which arises in the study of random graphs.  Clique percolation first appeared as a simple model for community detection in a work of Der\'enyi, Palla and Vicsek~\cite{DerenyiPallaVicsek:CliquePercolation}, and was then extensively featured in the network science literature~\cite{LiDengWang:cliqueperc, DerenyiPallaVicsek:CommunityStructure,
	TothVicsekPalla:overlappingmodularitycliqueperc,WangCaoSuzukiAihara:Epidemicscliqueperc}.
	In $(k,\ell)$--clique percolation, given a graph $\Gamma$ one 
	forms an auxiliary graph $K_{k,\ell}(\Gamma)$ whose vertices are 
	the $k$--cliques of $\Gamma$ and whose edges are those pairs of $k$--cliques having $\ell$ vertices in common.  One of the main research questions in the area was to determine the threshold $p$ for the emergence of a giant component in $K_{k, \ell}(\Gamma)$ when $\Gamma\sim \mathcal{G}_{n,p}$ is an Erd{\H o}s--R\'enyi random graph.  This was achieved in a landmark 2009 paper of Bollob\'as and Riordan~\cite{BollobasRiordan:cliquepercolation}, using highly sophisticated branching process techniques that in turn underpinned the arguments deployed by Behrstock, Falgas--Ravry and Susse to study square percolation and determine the threshold for quadratic divergence in random RACG in~\cite{behrstock2022square}.

\subsection{Organization of the paper}
In Section~\ref{section: preliminaries} we summarize the graph
theoretic and probabilistic notions we shall use, as well
as discuss divergence and a combinatorial description of
thickness that will play a key role in the proof of
Theorem~\ref{theorem: combinatorial characterisation of thickness}.
We then prove our extremal results Theorem~\ref{theorem: combinatorial
characterisation of thickness} and Corollary~\ref{cor: min number
relations of not rel hyperbolicity} in Section~\ref{section: extremal
result}, while Section~\ref{section: thickness} is devoted to 
proofs of Theorems~\ref{theorem: main} and
~\ref{theorem:
threshold for thickness of order 2 below that for order 1}.

\section{Preliminaries}\label{section: preliminaries}

\subsection{Graph theoretic notions and notation}
We recall here some standard graph theoretic notions and notation that we will use throughout the paper. We write $[n]:=\{1,2, \ldots, n\}$, and $x_1x_2\ldots x_r$ to denote the $r$-set $\{x_1, x_2, \ldots, x_r\}$. Given a set $S$, we let $S^{(r)}$ denote the collection of all subsets of $S$ of size $r$.

A graph is a pair $\Gamma=(V,E)$, where $V=V(\Gamma)$ is a set of vertices and $E=E(\Gamma)$ is a subset of $V^{(2)}$. All graphs considered in this paper are thus simple graphs, with no loops or multiple edges. We use $v(\Gamma):=\vert V(\Gamma)\vert$  and $e(\Gamma):=\vert E(\Gamma)\vert$ to denote respectively the order and the size (i.e.\ the number of vertices and the number of edges) of a graph.  Two subgraphs $\Gamma$ and $\Gamma'$ are isomorphic if there exists a bijection from $V(\Gamma)$ to $V(\Gamma')$ taking edges to edges and non-edges to non-edges; we denote this fact by $\Gamma \cong \Gamma'$.

A subgraph of $\Gamma$ is a graph $\Gamma'$ with $V(\Gamma')\subseteq
V(\Gamma)$ and $E(\Gamma')\subseteq E(\Gamma)$.  We say $\Gamma'$ is
the subgraph of $\Gamma$ induced by a set of vertices $S$ if $\Gamma'
=(S, S^{(2)}\cap E(\Gamma))$, and denote this fact by writing
$\Gamma'=\Gamma[S]$.  The complement of $\Gamma$ is the graph
$\overline{\Gamma}:=(V, V^{(2)}\setminus E)$.  
Given a vertex $x$ in $\Gamma$, we denote the set of its neighbors by $N_{\Gamma}(x):=\{y\in V(\Gamma): \ xy \in
E(\Gamma)\}$.

A path of length $\ell\geq 0$ in $\Gamma$ is a sequence of distinct vertices $v_0, v_1, \ldots, v_{\ell}$ with $v_iv_{i+1}\in E(\Gamma)$ for all $i\in [\ell-1]\cup\{0\}$. The vertices $v_0$ and $v_{\ell}$ are called the endpoints of the path. Two vertices are said to be connected in $\Gamma$ if they are the endpoints of some path of finite length. Being connected in $\Gamma$ is an equivalence relation on the vertices of $\Gamma$, whose equivalence classes form the connected components of $\Gamma$.  If there is a unique connected component, then the graph $\Gamma$ is said to be connected. A minimally connected subgraph of a connected graph is called a
spanning tree. Two useful facts about trees we shall use in our argument are (i) that every connected graph contains a spanning tree, and (ii) that the vertex set of a tree can be ordered starting from any vertex $v$ as $v=v_1$, $v_2$, $\ldots$, $v_n$ in such a way that for every $j>1$ the vertex $v_j$ sends at least one edge into the set $\{v_i: \ 1\leq i <j\}$.





Finally, we denote by $K_{1,2}$ the \emph{cherry}, or induced path on
three vertices, $K_{1,2}:=([3], \{12, 23\})$; we let $C_4$ denote the
\emph{square} (or \emph{$4$-cycle}), $C_4:=([4], \{12,23,34, 14\})$;
and we write $K_m$ for the \emph{clique} (or \emph{complete graph}) of
order $m$, $K_m:=([m], [m]^{(2)})$.

\subsection{Probabilistic notation and tools}
We write $\mathbb{P}$ and $\mathbb{E}$ and $\mathrm{Var}$ for probability, expectation and variance respectively. 
We say that a sequence of events $\mathcal{E}=\mathcal{E}(n)$, $n\in \mathbb{N}$  holds \emph{a.a.s.}\ (asymptotically almost surely) if $\lim_{n\rightarrow \infty} \mathbb{P}(\mathcal{E}(n))=1$. Throughout the paper we use standard Landau notation: for functions $f,g: \ \mathbb{N}\rightarrow \mathbb{R}$ we write $f=o(g)$ if $\lim_{n\rightarrow \infty} f(n)/g(n)=0$, $f=O(G)$ if there exists a real constant $C>0$ such that  $\limsup_{n\rightarrow \infty} \vert f(n)/g(n)\vert \leq C$. We further write $f=\omega(g)$ if $g=o(f)$, and $f=\Omega(g)$ if $g=O(f)$.

We shall make repeated use of Markov's inequality: given a non-negative integer-valued random variable $X$,  $\mathbb{P}(X>a)\leq \frac{1}{a+1}\mathbb{E}X$ for any integer $a \geq 0$.

\subsection{Divergence}
A useful object in geometry is the \emph{divergence function} of a
geodesic space.  Roughly speaking, this function measures the length
of a shortest path between pairs of points at distance $r$ apart in
the space when forced to avoid a ball of radius $r$ around a third
point.  There are a number of different ways to formalize this and it
is proven in~\cite[Proposition~3.12]{DrutuMozesSapir} that the various
definitions in the literature carry the same information.  For more
background on divergence, we refer the reader to, e.g.,  Behrstock and
Dru{\c{t}u}~\cite{BehrstockDrutu:thick2} or Dru{\c{t}}u, Mozes and
Sapir~\cite{DrutuMozesSapir}.

A quasi-isometry preserves the divergence function up to affine 
functions. Hence the \emph{growth rate of the divergence function} is a 
quasi-isometry invariant and thus is the same for all Cayley graphs of 
a given finitely generated group, 
independently of the choice of the finite generating set. Accordingly, we will abuse language 
slightly and just say the divergence function is linear, quadratic, cubic,
polynomial, etc, when strictly speaking we are talking about the 
growth rate of the divergence function.  

\subsection{Thickness and the square graph}\label{section: def thickness}

An important role in this paper is played by a graph theoretic notion
of thickness.  \emph{Graph-theoretic thickness} was introduced by
Behrstock, Hagen and Sisto in~\cite{BehrstockHagenSisto:coxeter} as a
combinatorial way of capturing a geometric/algebraic property of groups called
\emph{algebraic thickness}.

This latter notion of thickness first appeared in the context of
geometric group theory in work of Behrstock, Dru\c{t}u and
Mosher~\cite{BDM}.  Algebraic thickness is defined inductively.
Roughly speaking, the base level of this property (algebraic thickness
of order zero) holds in groups that geometrically look like the direct
product of two infinite groups.  Higher levels of this property hold
for groups that are a union of subgroups, each of which looks like a
direct product, and which can be chained together through infinite
diameter intersections. (When the chaining can be done in a nice 
group-theoretic way this leads to algebraic thickness; in this paper, 
we only require adjacent steps to have infinite 
intersection, which corresponds to a more general invariant called 
\emph{thickness}.)

As we explain below, the graph theoretic thickness of a
presentation graph completely encodes the  
thickness of the associated right-angled Coxeter
group.  Accordingly, throughout this paper, without creating any 
ambiguity, we shall use the term `thickness' to simultaneously refer to
both notions. We now
formally define thickness for graphs.

Given graphs $\Gamma_1, \Gamma_2$, the \emph{join} of
$\Gamma_1$ and $\Gamma_2$ is the graph $\Gamma_1 * \Gamma_2$ obtained
by taking the disjoint union of $\Gamma_1$ and $\Gamma_2$ and adding a
complete bipartite graph between them.  In other words,
$V(\Gamma_1*\Gamma_2):=V(\Gamma_1)\sqcup V(\Gamma_2)$ and
$E(\Gamma_1 *\Gamma_2):= E(\Gamma_1)\cup E(\Gamma_2)\cup\{v_1v_2:
\ v_1\in V(\Gamma_1), v_2\in V(\Gamma_2)\}$.  Note that the RACG of a 
join of two graphs is the direct product of  the RACGs associated to the two factors of the join. So for example the square $C_4$, which is the join of two non-edges, corresponds to the direct product of two free groups of rank two on generators of order $2$.

With this in hand, we now give the base case of our inductive definition of thickness in graphs, thickness of order $0$. By ~\cite[Proposition~2.11]{BehrstockHagenSisto:coxeter} this graph 
theoretic notion corresponds to the associated RACG being algebraically thick of 
order $0$.
\begin{definition}[Thickness of order $0$]	
	A graph $\Gamma$ is \emph{thick of order $0$} if there exists a 	partition $V(\Gamma)=A \sqcup B$ of its vertex set for which neither  	$\Gamma[A]$ nor $\Gamma[B]$ is a clique and such that 	$\Gamma=\Gamma[A]*\Gamma[B]$. 
	
	Identifying vertex sets in $\Gamma$ with the corresponding induced subgraphs, we say that a subset $S\subseteq V(\Gamma)$ is \emph{thick of order $0$} if the induced subgraph $\Gamma[S]$ is thick of order $0$. Further, we say that $S$ is a \emph{maximal} thick of order $0$ subset if for every $S'$ with  $S\subsetneq S'\subseteq V(\Gamma)$ we have that $S'$ is not thick of order $0$. 
\end{definition}
\noindent Thus a subset $S\subset V(\Gamma)$ is thick of order 0 if and only if
there exists a bipartition $S=A\sqcup B$ such that the associated 
sets of non-edges  
$S_A:=A^{(2)}\cap E(\overline{\Gamma})$ and $S_B:=B^{(2)}\cap
E(\overline{\Gamma})$ are both non-empty (ensuring $\Gamma[A]$,
$\Gamma[B]$ are non-cliques) and $\Gamma[A\cup
B]=\Gamma[A]*\Gamma[B]$.  Further, $S$ is a maximal thick of order
$0$ subset if in addition for every such partition $A\sqcup B$ and
every $v\in V(\Gamma)\setminus S$, there are some $a\in A$ and $b\in 
B$ such that neither of $av$ nor $bv$ is an edge of
$\Gamma$.

To inductively define higher levels of thickness, we shall glue 
together subsets which are thick of lower orders. Accordingly, we define the following, which will form the base level for our inductive definition. 
\begin{definition}[$T_0$: the level $0$ subsets]
The \emph{level $0$ subsets of a graph $\Gamma$}, $T_0(\Gamma)$, will denote the collection of sets $S\subseteq V(\Gamma)$ such that $S$ is a maximal thick of order $0$ subset.  Given $S\in T_0(\Gamma)$, we refer to the
associated set of non-edges $S^{(2)}\cap E(\overline{\Gamma})$ as the \emph{level $0$ components} of $\Gamma$.
\end{definition}

The level $0$ components will be our building blocks to construct the first of
a sequence of auxiliary graphs $T_k(\Gamma)$,
$k\in \mathbb{Z}_{\geq 1}$, see Remark~\ref{remark: T1} below.  Each of these graphs will have as its
vertex set the collection $E(\overline{\Gamma})$ of non-edges of
$\Gamma$, with connected
components merging as we increase the value of $k$ according to specific rules, which we specify below.

The first of these auxiliary graphs is known as the \emph{square graph}, as it encodes the induced squares of $\Gamma$ and their pairwise interactions.  
\begin{definition} [$T_1$: the square graph]\label{def:square graph}
Given a graph $\Gamma$, the \emph{square graph of $\Gamma$}, denoted
by $T_1(\Gamma)$, is defined as follows.  The vertex set of
$T_1(\Gamma)$ is the collection $E(\overline{\Gamma})$ of non-edges of
$\Gamma$.  The edges of $T_1(\Gamma)$ consist of those pairs of
non-edges $f, f'\in E(\overline{\Gamma})$ such that the set 
of vertices $f\cup f'$ induces a copy of the \emph{square} $C_4$ in
$\Gamma$. We refer to connected components in $T_1(\Gamma)$ as square components, or level $1$ components.  
\end{definition}
\begin{remark}\label{remark: T1}
An equivalent definition of the square graph of $\Gamma$ is to let $T_1(\Gamma)$ be the graph with vertex set $E(\overline{\Gamma})$ whose edges are obtained by adding for each level $0$ subset $S$
a clique  $S^{(2)}\cap E(\overline{\Gamma})$ (replacing multi-edges by single edges as necessary), whence the connection to $T_0(\Gamma)$.
\end{remark}
\begin{remark}
For certain applications in geometric group theory, it is natural to work with another auxiliary graph closely related to (but distinct from) the square graph $T_1(\Gamma)$, namely the line graph of $T_1(\Gamma)$, denoted by $S(\Gamma)$, see~\cite{behrstock2018global,DaniThomas:divcox}.  This other graph
(which in~\cite{behrstock2018global,DaniThomas:divcox} is referred to as the square graph) has the induced squares of $\Gamma$ as its vertices, and as its edges those pairs of induced squares having a diagonal in common.  

The square graph given in Definition~\ref{def:square graph} was
introduced by Behrstock, Falgas--Ravry and Susse
in~\cite{behrstock2022square} as a more natural object to study from a
combinatorial viewpoint.  As noted
in~\cite[Remark~1.2]{behrstock2022square} the two non-equivalent 
definitions of a square
graph carry essentially the same information, so working with one
rather the other is primarily a question of which definition is convenient for the
application one has in mind.
\end{remark}
The study of the properties of $S(\Gamma)$ and
$T_1(\Gamma)$ when $\Gamma$ is a random graph is known as \emph{square
percolation}, by analogy with the well-studied \emph{clique
percolation} model from network science mentioned in the introduction.

Having defined the level $0$ subsets and the level $1$ graph, we will
define the level $k$ graphs inductively after first giving a 
preliminary definition.

\begin{definition}[Suspension]
	Given $f=\{u_1, u_2\}\in E(\overline{\Gamma})$, we let
	$\mathrm{susp}(f):=\{v\in V(\Gamma):\ v \in N_{\Gamma}(u_1)\cap
	N_{\Gamma}(u_2) \}$ denote the collection of common neighbors of
	the endpoints of $f$, or, equivalently, the collection of all
	vertices $v$ such that the $3$--set of vertices $\{v\}\cup f$ induces a copy of the cherry $K_{1,2}$ in $\Gamma$.  We refer to $\mathrm{susp}(f)$ as the
	suspension based at $f$.
\end{definition}
\noindent We say a (sub)graph $\Gamma$ is a \emph{suspension} (sub)graph if there exists a 
	partition $V(\Gamma)=A \sqcup B$ of its vertex set for which 
	$|A|=2$, $\Gamma[A]$ is a non-edge, $B\neq\emptyset$ and  $\Gamma=\Gamma[A]*\Gamma[B]$. 
	If additionally $\Gamma[B]$ is a clique on one or more vertices, then we call $\Gamma$ a 
	\emph{strip} (sub)graph.

Using suspensions, we shall define a notion of \emph{support} for a component $C$ in the level $k$ graph $T_k(\Gamma)$ for $k\geq 1$. Recall that $T_k(\Gamma)$ is a graph on $E(\overline{\Gamma})$, i.e.\ the vertices of $T_k(\Gamma)$ are non-edges of $\Gamma$. The support of a component $C$ of $T_k(\Gamma)$ will be a set of vertices of $\Gamma$, $\mathrm{supp}_k(C)\subseteq V(\Gamma)$, that generalizes the notion of a level $0$ subset and intuitively corresponds to `the set of vertices of $\Gamma$ inside which $C$ lives'; more specifically, the support will be the union of all vertices from $V(\Gamma)$ that are either contained inside some non-edge $e\in E(\overline{\Gamma})\cap C$  (viewing $e$ as a set of two vertices) or contained inside a suspension based on such a non-edge $e$.

Generalizing our definition of the square graph which encoded 
intersection patterns of components of 
$T_{0}$, we inductively define 
the level $k+1$ graph $T_{k+1}(\Gamma)$ by merging components in the level $k$ graph if 
there is a non-edge of $\Gamma$ in the intersection of their supports; more formally:
\begin{definition}[$T_{k+1}$, $k\geq 1$: the level $k+1$ graph]
	Let $\Gamma$ be a graph. For $k\geq 1$, given a connected component $C$ in $T_{k}$, we define its
	\emph{support} to be
	\[\mathrm{supp}_{k}(C):=\bigcup_{f\in C}f\cup \mathrm{susp}(f),\]
	i.e.\ the collection of vertices in $V(\Gamma)$ that either belong
	to a member $f$ of $C$ or to a suspension based at some $f\in C$.
	Further, we define the latch-set of $C$ to be
	$\mathrm{latch}_{k}(C):= E(\overline{\Gamma})\cap
	\left(\mathrm{supp}_{k}(C)\right)^{(2)}$.

We  then define $T_{k+1}(\Gamma)$ as a graph on the 
	vertex set 
	$E(\overline{\Gamma})$ (the non-edges of $\Gamma$) 
	by joining $f_1, f_2\in
	E(\overline{\Gamma})$ by an edge if for $i\in\{1,2\}$ there exist
	connected components $C_1$, $C_2$  in $T_k$ such that $f_i \in
	\mathrm{latch}_{k}(C_i)$ and
	$\mathrm{latch}_{k}(C_1)\cap\mathrm{latch}_{k}(C_2) \neq
	\emptyset$.

\end{definition}

\begin{figure}
	\centering
	\tikzset{every picture/.style={line width=0.75pt}} 
	\begin{tikzpicture}[x=0.75pt,y=0.75pt,yscale=-1,xscale=1]
		
		\coordinate (A) at (118,88); 
 		 \coordinate (B) at (180,88); 
		\coordinate (C) at (180,103.5);
		\coordinate (D) at (180,57);
		\coordinate (E) at (180,72.5);
		\coordinate (F) at (180,41.5);
		\coordinate (G) at (118,57);
 		 \draw[color=black,fill=black] (A) circle[radius=0.05cm];
 		 \draw[color=black,fill=black] (B) circle[radius=0.05cm];
		  \draw[color=black,fill=black] (C) circle[radius=0.05cm];
 		 \draw[color=black,fill=black] (D) circle[radius=0.05cm];
		  \draw[color=black,fill=black] (E) circle[radius=0.05cm];
 		 \draw[color=black,fill=black] (F) circle[radius=0.05cm];
		  \draw[color=black,fill=black] (G) circle[radius=0.05cm];
		  
		\draw    (118,88) -- (180,88) ;
		\draw    (118,88) -- (180,103.5) ;
		\draw    (118,88) -- (180,57) ;
		\draw    (118,88) -- (180,72.5) ;
		\draw    (118,88) -- (180,41.5) ;
		\draw    (118,57) -- (180,41.5);
		\draw    (118,57) -- (180,57) ;
		\draw    (118,57) -- (180,88) ;
		\draw    (118,57) -- (180,103.5) ;
		\draw    (118,57) -- (180,72.5) ;

		 \coordinate (A1) at (270,57); 
 		 \coordinate (B1) at (300,57); 
		\coordinate (C1) at (270,88);
		\coordinate (D1) at (300,88);
		\coordinate (E1) at (330,57);
		\coordinate (F1) at (330,88);
		\coordinate (G1) at (360,57);
		\coordinate (I1) at (360,88);
		\coordinate (J1) at (390,57);
		\coordinate (H1) at (390,88);
		\coordinate (K1) at (420,57);
		\coordinate (L1) at (420,88);
		
 		 \draw[color=black,fill=black] (A1) circle[radius=0.05cm];
 		 \draw[color=black,fill=black] (B1) circle[radius=0.05cm];
		  \draw[color=black,fill=black] (C1) circle[radius=0.05cm];
 		 \draw[color=black,fill=black] (D1) circle[radius=0.05cm];
		  \draw[color=black,fill=black] (E1) circle[radius=0.05cm];
 		 \draw[color=black,fill=black] (F1) circle[radius=0.05cm];
		  \draw[color=black,fill=black] (G1) circle[radius=0.05cm];
		   \draw[color=black,fill=black] (I1) circle[radius=0.05cm];
 		 \draw[color=black,fill=black] (J1) circle[radius=0.05cm];
		  \draw[color=black,fill=black] (H1) circle[radius=0.05cm];
 		 \draw[color=black,fill=black] (K1) circle[radius=0.05cm];
		  \draw[color=black,fill=black] (L1) circle[radius=0.05cm];
		
		\draw    (270,57) -- (300,57) ;
		\draw    (300,57) -- (270,88) ;
		\draw    (270,57) -- (300,88) ;
		\draw    (270,88) -- (300,88) ;
		\draw    (300,57) -- (330,57) ;
		\draw    (330,57) -- (300,88) ;
		\draw    (300,57) -- (330,88) ;
		\draw    (300,88) -- (330,88) ;
		\draw    (330,57) -- (360,57) ;
		\draw    (360,57) -- (330,88) ;
		\draw    (330,57) -- (360,88) ;
		\draw    (330,88) -- (360,88) ;
		\draw    (360,57) -- (390,57) ;
		\draw    (390,57) -- (360,88) ;
		\draw    (360,57) -- (390,88) ;
		\draw    (360,88) -- (390,88) ;
		\draw    (390,57) -- (420,57) ;
		\draw    (420,57) -- (390,88) ;
		\draw    (390,57) -- (420,88) ;
		\draw    (390,88) -- (420,88) ;
		
		\draw (140,110) node [anchor=north west][inner sep=0.75pt]   [align=left] {$K_{2,5}$};
		\draw (290,110) node [anchor=north west][inner sep=0.75pt]   [align=left] {A `path of squares'};
	\end{tikzpicture}
	\caption{examples of a thick of order $0$ graph (left) and a thick of order $1$ graph (right).}
     \label{fig 1}
\end{figure}

\begin{figure}
	\centering
	\tikzset{every picture/.style={line width=0.75pt}} 
	
	\begin{tikzpicture}[x=0.75pt,y=0.75pt,yscale=-1,xscale=1]
		
		 \coordinate (A1) at (175,65); 
 		 \coordinate (B1) at (225,65); 
		\coordinate (C1) at (175,100);
		\coordinate (D1) at (225,100);
		\coordinate (E1) at (275,65);
		\coordinate (F1) at (275,100);
		\coordinate (G1) at (325,65);
		\coordinate (I1) at (325,100);
		\coordinate (J1) at (375,65);
		\coordinate (H1) at (375,100);
		\coordinate (K1) at (425,65);
		\coordinate (L1) at (425,100);
		\coordinate (M1) at (225,140);
		\coordinate (N1) at (275,140);
		\coordinate (O1) at (325,140);
		\coordinate (P1) at (375,140);
		\coordinate (S1) at (175,140);

 		 \draw[color=black,fill=black] (A1) circle[radius=0.05cm];
 		 \draw[color=black,fill=black] (B1) circle[radius=0.05cm];
		  \draw[color=black,fill=black] (C1) circle[radius=0.05cm];
 		 \draw[color=black,fill=black] (D1) circle[radius=0.05cm];
		  \draw[color=black,fill=black] (E1) circle[radius=0.05cm];
 		 \draw[color=black,fill=black] (F1) circle[radius=0.05cm];
		  \draw[color=black,fill=black] (G1) circle[radius=0.05cm];
		   \draw[color=black,fill=black] (I1) circle[radius=0.05cm];
 		 \draw[color=black,fill=black] (J1) circle[radius=0.05cm];
		  \draw[color=black,fill=black] (H1) circle[radius=0.05cm];
 		 \draw[color=black,fill=black] (K1) circle[radius=0.05cm];
		  \draw[color=black,fill=black] (L1) circle[radius=0.05cm];
		    \draw[color=black,fill=black] (M1) circle[radius=0.05cm];
 		 \draw[color=black,fill=black] (N1) circle[radius=0.05cm];
		  \draw[color=black,fill=black] (O1) circle[radius=0.05cm];
 		 \draw[color=black,fill=black] (P1) circle[radius=0.05cm];
		  \draw[color=black,fill=black] (S1) circle[radius=0.05cm];

		\draw    (175,65) -- (225,65) ;
		\draw    (225,65) -- (175,100) ;
		\draw    (175,65) -- (225,100) ;
		\draw    (175,100) -- (225,100) ;
		\draw    (225,65) -- (275,65) ;
		\draw    (275,65) -- (225,100) ;
		\draw    (225,65) -- (275,100) ;
		\draw    (225,100) -- (275,100) ;
		\draw    (275,65) -- (325,65) ;
		\draw    (325,65) -- (275,100) ;
		\draw    (275,65) -- (325,100) ;
		\draw    (275,100) -- (325,100) ;
		\draw    (325,65) -- (375,65) ;
		\draw    (375,65) -- (325,100) ;
		\draw    (325,65) -- (375,100) ;
		\draw    (325,100) -- (375,100) ;
		\draw    (375,65) -- (425,65) ;
		\draw    (425,65) -- (375,100) ;
		\draw    (375,65) -- (425,100) ;
		\draw    (375,100) -- (425,100) ;
		\draw    (175,100) -- (225,140) ;
		\draw    (375,100) -- (375,140) ;
		\draw    (175,100) -- (275,140) ;
		\draw    (175,100) -- (325,140) ;
		\draw    (175,100) -- (175,140) ;
		\draw    (175,100) -- (375,140) ;
		\draw    (375,100) -- (325,140) ;
		\draw    (375,100) -- (275,140) ;
		\draw    (375,100) -- (225,140) ;
		\draw    (375,100) -- (175,140) ;
		
		\draw (180,160) node [anchor=north west][inner sep=0.75pt]   [align=left] {A thick union of $K_{2,5}$ and a path of squares};
	\end{tikzpicture}\caption{an example of a thick of order $2$ graph}
	\label{fig 2}
\end{figure}

\begin{definition}[Thickness]\label{def: thickness}
	Given a collection of non-edges $C\subseteq E(\overline{\Gamma})$
	that forms a level $k$ connected component for some $k\geq 1$ but
	is not a subset of a level $0$ component of $\Gamma$, we say that
	$C$ is a level $k_0$ component if $k_0$ is the least integer
	$k\geq 1$ such that $C$ forms a connected component in $T_k$.  We
	say that such a component has \emph{full support} if
	$\mathrm{supp}_{k_0}(C)=V(\Gamma)$ or, equivalently, if
	$\mathrm{latch}_{k_0}(C)=E(\overline{\Gamma})$.  If this occurs,
	we say that the graph $\Gamma$ is \emph{thick of order $k_0$}.
	More generally, we say that $\Gamma$ is \emph{thick} if it is
	thick of some finite order $k\geq 0$.
\end{definition}
\begin{remark}[Non-monotonicity under the addition/removal of edges]\label{remark: nonmonotonicity}
	The thickness properties of the graphs $T_k(\Gamma)$ are \emph{not} monotone under the addition of edges to $\Gamma$. Indeed, adding an edge to $\Gamma$ could potentially create some new induced squares in $\Gamma$  (and thus new edges in $T_1(\Gamma)$) as well as destroy existing ones. As an example, consider the complete bipartite graph $\Gamma=K_{2,n-2}$ with one part of size $2$ and another of size $n-2$. This graph is clearly thick of order $0$. However if we delete any edge from $\Gamma$, or if we add an edge inside the part of size $2$, then $\Gamma$ ceases to be thick of any order.
\end{remark}

The non-monotonicity of thickness under the addition or 
deletion of edges  make its study a delicate matter, and explains in particular why many results on the geometric properties of random RACG feature both a lower and an upper bound on the edge probability $p$ required for a certain property to hold a.a.s..

To compute the order of thickness of RACGs, the following definition of
hypergraph index was introduced by Levcovitz
in~\cite{Levcovitz:QIinvariantThickness}.  Levcovitz proved
in~\cite[Theorem~A]{Levcovitz:DivergenceRACG} that a graph $\Gamma$
has finite hypergraph index if and only if the corresponding RACG
$W_{\Gamma}$ is thick of order $k$; moreover, this
occurs if and only if the RACG $W_{\Gamma}$ has divergence which is
polynomial of degree $k+1$. This strengthens earlier partial results 
from \cite{BDM,  DaniThomas:divcox, BehrstockHagenSisto:coxeter, behrstock2018global}.

\begin{definition}[Hypergraph index]
Let $\Gamma$ be a graph.  Let $\Psi(\Gamma)$ denote the collection of
subsets of $V(\Gamma)$ that induce strip subgraphs in $\Gamma$.  Let
$\Lambda_{0}$ be the (non-uniform) hypergraph whose vertex set is
$V(\Gamma)$ and whose set of hyperedges is $T_{0}(\Gamma)\cup
\Psi(\Gamma)$.

For all integers $n\geq 0$, inductively define a hypergraph
$\Lambda_{n+1}$ as follows.  Introduce an equivalence relation
$\cong_n$ on pairs of hyperedges of $\Lambda_{n}$ by setting
$E\equiv_{n}E'$ when there exists a finite sequence of hyperedges
$E=E_{1}$, $E_2$, $\ldots$, $E_k$, $E_{k+1}=E'$ such that for every
$i\in [k]$ the subgraph of $\Gamma$ induced by $E_{i}\cap E_{i+1}$
contains a non-edge.  The hypergraph $\Lambda_{n+1}$ then has
$V(\Gamma)$ as its vertex set, and a hyperedge $\bigcup_{E\in C}E$ for
each $\equiv_n$--equivalence class $C$.

	If $T_{0}(\Gamma)\neq\emptyset$ we define the \emph{hypergraph
	index} of $\Gamma$ to be the smallest non-negative integer $n$ for
	which $\Lambda_n$ contains $V(\Gamma)$ as a hyperedge.  When no
	such $n$ exists or when $T_{0}(\Gamma)=\emptyset$ we define the
	\emph{hypergraph index} of $\Gamma$ to be $\infty$.
\end{definition}
\noindent Our definition of higher order thickness for graphs is defined so 
that it encodes this notion. Indeed, it is not hard to see that the hypergraph index of Levcovitz coincides precisely with the order of thickness as defined in
Definition~\ref{def: thickness}.  Thus Levcovitz's results imply the 
order of thickness of a RACG $W_{\Gamma}$ coincides precisely with 
the order of 
graph theoretic thickness of its presentation graph $\Gamma$, and
the two notions can be conflated in the context of RACGs.

\section{An extremal result for thickness in graphs}\label{section: extremal result}
We shall prove the following strengthening of Theorem~\ref{theorem: combinatorial characterisation of thickness} using an inductive strategy. 
		Recall that the union $\Gamma_1\cup \Gamma_2$ of the graphs $\Gamma_1$ and $\Gamma_2$ is the graph $\Gamma$ with $V(\Gamma)=V(\Gamma_1)\cup V(\Gamma_2)$ and $E(\Gamma)=E(\Gamma_1)\cup E(\Gamma_2)$. 
		

		\begin{theorem}\label{theorem: strong form of extremal theorem}
			Let $k\in \mathbb{Z}_{\geq 0}$.  Let $\Gamma_1=(V_{1},
			E_1)$ be a graph, and let $\Gamma_2=(V_2, E_2)$ be 
			either a copy of the cherry $K_{1,2}$ or 
			a thick of order at most $k$ graph.
			Set $I:= V_1\cap V_2$, and suppose that $\Gamma_2[I]$ is
			not a clique of order at most $2$.  Then
			\begin{align}\label{eq: bound on union with k-thick, inductive statement} e(\Gamma_1\cup \Gamma_2)\geq e(\Gamma_1)+2\vert V_2\setminus V_1\vert.\end{align}
			In particular if $\Gamma$ is 
			thick of order at most $k+1$, then 
			$e(\Gamma)\geq 2v(\Gamma)-4$.

		\end{theorem}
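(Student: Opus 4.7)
My plan is to prove the inductive inequality~\eqref{eq: bound on union with k-thick, inductive statement} by induction on $k$. The ``in particular'' assertion then follows at the end by applying the inequality with $\Gamma_2 = \Gamma$ (thick of order at most $k+1$) and $\Gamma_1 = (\{u,v\}, \emptyset)$ for some non-edge $\{u, v\}$ of $\Gamma$ (such a non-edge must exist because thickness requires a non-trivial level-$0$ subset): the hypothesis holds since $\Gamma[I]$ is a non-edge, and the conclusion gives $e(\Gamma) \geq 2(v(\Gamma)-2) = 2v(\Gamma) - 4$.

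For the base case $k = 0$, when $\Gamma_2 = K_{1,2}$ the condition on $I$ forces either $|I| = 3$ (in which case $|V_2 \setminus V_1| = 0$ and the inequality is trivial) or $I$ to consist of the unique non-edge of the cherry alone, in which case both edges of $\Gamma_2$ are incident to the unique new vertex in $V_2 \setminus V_1$ and the inequality is immediate. When $\Gamma_2 = \Gamma_2[A] * \Gamma_2[B]$ is thick of order $0$, I would write $\alpha = |I \cap A|$, $\beta = |I \cap B|$, $a' = |A| - \alpha$, $b' = |B| - \beta$, and note that $|E_1 \cap E_2| \leq e(\Gamma_2[I])$ reduces matters to showing
\[e(\Gamma_2) - e(\Gamma_2[I]) \geq 2(a' + b').\]
Since $\Gamma_2[I] = \Gamma_2[I\cap A] * \Gamma_2[I \cap B]$, the bipartite cross-edges alone contribute $|A||B| - \alpha\beta = \alpha b' + a'\beta + a'b'$ to the left-hand side, and a short case analysis on $(\alpha, \beta)$, using $|A|, |B| \geq 2$ and the hypothesis that $\Gamma_2[I]$ is not a clique of order at most $2$, reduces the required bound to $(|A| - 2)(|B| - 2) \geq 0$ in the worst subcases.

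For the inductive step $k \geq 1$, given $\Gamma_2$ thick of order $k$, I would fix a level-$k$ component $C$ of $T_k(\Gamma_2)$ with full support. This $C$ is formed by merging certain level-$(k-1)$ (or lower) components $D_1, \ldots, D_r$ whose latch-sets overlap, yielding a cover of $V(\Gamma_2)$ by $\bigcup_i \mathrm{supp}_{k-1}(D_i) \cup \bigcup_{f \in C} \mathrm{susp}(f)$ and a corresponding decomposition $\Gamma_2 = \bigcup_j H_j$ where each $H_j$ is either a thick-of-order-at-most-$(k-1)$ induced subgraph $\Gamma_2[\mathrm{supp}_{k-1}(D_i)]$ or a cherry $\Gamma_2[\{x,y,w\}]$ (for $\{x,y\} \in C$, $w \in \mathrm{susp}(\{x,y\})$). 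Because the ``merge graph'' on these pieces (with edges for shared latch non-edges) is connected, I can order them along a spanning tree as $H_0, H_1, \ldots$ so that each $H_j$ with $j \geq 1$ shares at least one non-edge of $\Gamma_2$ with $\bigcup_{i < j} H_i$. Iteratively applying the inductive hypothesis (for $k-1$) to add pieces then telescopes to the required inequality.

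The main obstacle, I anticipate, will be the choice of the first piece $H_0$: the local condition ``$H_0[V_1 \cap V(H_0)]$ is not a clique of order at most $2$'' does not follow automatically from the global hypothesis on $\Gamma_2[I]$, since the non-edge or large intersection witnessing the hypothesis may be split across several pieces of the decomposition. When $\Gamma_2[I]$ contains a non-edge $f$ lying in $C$, we may take $H_0 = \Gamma_2[\mathrm{supp}_{k-1}(D)]$ for the level-$(k-1)$ component $D$ with $f \in \mathrm{latch}_{k-1}(D)$; but when $\Gamma_2[I]$ is a clique of order at least $3$ (so contains no non-edge), or when the witnessing non-edge cannot be localised to a single piece, the argument becomes considerably more delicate and will likely require either a more flexible decomposition (possibly merging several pieces into a larger thick subunit for use as $H_0$) or a strengthened inductive invariant that tracks additional structure. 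The wealth of extremal examples described by the authors --- obtained by identifying non-edges of smaller extremal configurations --- signals that accommodating all such gluing patterns will be the technical heart of the proof.
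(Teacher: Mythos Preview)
Your overall framework matches the paper's: induction on $k$, the cherry case is trivial, the thick-of-order-$0$ base case is handled by counting cross-edges in the join (your reduction via $e(\Gamma_2)-e(\Gamma_2[I])$ is equivalent to the paper's direct count), and the inductive step proceeds by decomposing $\Gamma_2$ into lower-order pieces connected along a tree and feeding them in one by one. You have also correctly identified the crux of the argument: what to do when no single piece $H_{j}$ has $\Gamma_2[V_1\cap V(H_j)]$ failing to be a clique of order at most $2$.

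However, the proposal stops precisely at this obstacle, and the paper's resolution is not something one can wave away. The paper's ``Case 2'' argument runs as follows. Suppose every piece $H_j$ meets $V_1$ in a clique of size at most $2$. Pick a piece $H_1$ whose intersection $I_0:=V_1\cap V(H_1)$ is maximal (so $\vert I_0\vert\in\{1,2\}$), and walk along the tree until the first index $j_0$ at which some new vertex of $I$ appears, setting $I_1:=V_1\cap V(H_{j_0})$. The merged subgraph $\Gamma_{2,[j_0]}:=\bigcup_{j\leq j_0}H_j$ is itself thick of order at most $k$, so the ``in particular'' edge bound gives $e(\Gamma_{2,[j_0]})\geq 2\vert V_{2,[j_0]}\vert-4$. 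The key observation is that there are \emph{no edges of $\Gamma_{2,[j_0]}$ between $I_0\setminus I_1$ and $I_1\setminus I_0$} (since vertices of $I_1\setminus I_0$ lie in no $H_j$ with $j<j_0$, and vertices of $I_0\setminus I_1$ lie outside $H_{j_0}$). An inclusion--exclusion then shows $e(\Gamma_1\cup\Gamma_{2,[j_0]})\geq e(\Gamma_1)+2\vert V_{2,[j_0]}\setminus V_1\vert$, after which the remaining pieces can be added via the inductive hypothesis exactly as in your Case~1. Your suggestion of ``merging several pieces into a larger thick subunit'' is the right instinct, but the specific mechanism --- walking to the first new $I$-vertex and exploiting the missing $I_0$-to-$I_1$ edges --- is the content you are missing.

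This resolution has a structural consequence for your plan: the Case~2 argument \emph{uses} the edge bound $e(\Gamma')\geq 2v(\Gamma')-4$ for $\Gamma'$ thick of order at most $k$ inside the inductive step for~\eqref{eq: bound on union with k-thick, inductive statement} at level $k$. So you cannot simply defer the ``in particular'' to the end; the induction must carry both statements simultaneously. The paper does this via a separate proposition showing that~\eqref{eq: bound on union with k-thick, inductive statement} at level $k-1$ together with the edge bound for thick of order at most $k-1$ yields the edge bound for thick of order at most $k$ (this is essentially your $\Gamma_1=$non-edge trick, applied one level lower and then iterated along the tree decomposition).
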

		\begin{remark}\label{remark: cherry easy}
			If $\Gamma_2\cong K_{1,2}$, then~\eqref{eq: bound on union
			with k-thick, inductive statement} is easily seen to hold:
			if $\Gamma_2[I]$ is a non-clique, then it must either be
			the entirety of $\Gamma_{2}$, or it must consist of the
			unique non-edge in $\Gamma_2$, and in both cases the
			claimed upper bound holds.  Thus the content
			of Theorem~\ref{theorem: strong form of extremal theorem}
			lies in the case where $\Gamma_2$ is thick of order at
			most $k$; the formulation including the cherry as a special case is 
			nonetheless useful as it will make the formulation of our inductive argument easier.
		\end{remark}
		\noindent Our proof of Theorem~\ref{theorem: strong form of extremal theorem} follows an inductive strategy that relies on the following characterization of thick of order $k+1$ graphs.
		\begin{proposition}\label{prop: characterisation of thickness}
			Let $\Gamma$ be a thick of order $k+1$ graph, for some $k\geq 0$. Then for some  $T>0$,  there exists a collection of induced subgraphs of $\Gamma$,  $\Gamma_i=(V_i, E_i)$ for $i\in [T]$, and a tree $\mathcal{T}$ on the vertex set $[T]$ such that:
			\begin{enumerate}[(a)]
				\item for every $i\in [T]$, $\Gamma_i$ is a copy of the cherry $K_{1,2}$ 
				or is thick of
				order at most $k$;
				\item for every $ij \in E(\mathcal{T})$,  the induced subgraph
				$\Gamma[V_i\cap V_j]$ is not complete; 
				\item $\bigcup_{i=1}^T V_i =V$.
			\end{enumerate}
		\end{proposition}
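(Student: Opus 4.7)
My plan is to construct the decomposition directly from the top-level structure of $\Gamma$. By hypothesis $\Gamma$ has a connected component $C$ of $T_{k+1}(\Gamma)$ with full support $\mathrm{supp}_{k+1}(C)=V(\Gamma)$. Since $T_k$-edges are automatically $T_{k+1}$-edges (a short check using $\mathrm{supp}_k\subseteq\mathrm{supp}_{k+1}$), every connected component of $T_k(\Gamma)$ meeting $C$ lies entirely inside $C$, and so for $k\geq 1$ one can write $C=D_1\sqcup\cdots\sqcup D_M$ where each $D_j$ is a connected component of $T_k(\Gamma)$. In the case $k=0$ one reads instead the level-$0$ components of $T_0(\Gamma)$ that contain at least one non-edge of $C$; each such component already induces a thick-of-order-$0$ subgraph, so the base case is essentially immediate and no cherries are required.

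Assuming $k\geq 1$, I build the $\Gamma_i$'s as follows. For each $D_j$ with $|D_j|\geq 2$, I take the single piece $\Gamma_j:=\Gamma[\mathrm{supp}_k(D_j)]$ and claim that it is thick of order at most $k$. For each singleton $D_j=\{f\}$, the same monotonicity forces $f$ to be isolated in $T_1(\Gamma)$, hence to lie in no induced square of $\Gamma$; but then any two vertices of $\mathrm{susp}(f)$ must be adjacent (else they would combine with $f$ to induce a square), so $\mathrm{susp}(f)$ is a clique and $\Gamma[\mathrm{supp}_k(D_j)]=\Gamma[f\cup\mathrm{susp}(f)]$ is a strip. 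If $\mathrm{susp}(f)\neq\emptyset$ I split this strip into cherry pieces $\{u_1,u_2,v\}$ indexed by $v\in\mathrm{susp}(f)$, all sharing the non-edge $f$. If $\mathrm{susp}(f)=\emptyset$ I discard $D_j$ altogether: the latch-sharing condition defining $T_{k+1}$-edges forces the pair $\{u_1,u_2\}$ into the support of some neighbouring $D_{j'}\subseteq C$ (the degenerate alternative $|C|=1$ with empty suspension collapses to $|V(\Gamma)|\leq 2$). A short case analysis based on $\mathrm{supp}_{k+1}(C)=V(\Gamma)$ then verifies property (c).

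The tree $\mathcal{T}$ is finally extracted as a spanning tree of the auxiliary intersection graph $\mathcal{H}$ whose vertices are the pieces and whose edges record pairs of pieces whose intersection contains a non-edge of $\Gamma$. Connectedness of $\mathcal{H}$ descends from the $T_{k+1}$-connectedness of $C$: a $T_{k+1}$-path between non-edges in $C$ passes through a sequence of level-$k$ components whose consecutive latch-sets intersect in some non-edge, which necessarily sits in the intersection of the corresponding supports; cherries coming from the same strip are pairwise adjacent through the shared non-edge $f$ and attach to any neighbouring piece by the same mechanism. The main obstacle in the plan is the claim that $\Gamma[\mathrm{supp}_k(D_j)]$ is thick of order at most $k$ whenever $|D_j|\geq 2$. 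I expect to settle this by an auxiliary induction on the least level $l\leq k$ at which $D_j$ first appears as a connected component: every square, suspension, and latch-sharing involving a non-edge of $D_j$ lives inside $\mathrm{supp}_l(D_j)$, which lets one show that $D_j$ retains the status of a full-support connected component at level $l$ within the induced subgraph $\Gamma[\mathrm{supp}_l(D_j)]$, making the latter thick of order at most $l$.
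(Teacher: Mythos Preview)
The paper's proof is essentially a two-line citation: it invokes \cite[Theorem~II]{BehrstockHagenSisto:coxeter}, which already packages a thick graph as a connected family of lower-order thick induced subgraphs overlapping in non-cliques, and then passes to a spanning tree. Your approach is genuinely different: you attempt to extract the decomposition directly from the $T_k$-machinery defined in this paper, without appealing to external results. This is more self-contained, and gives an explicit recipe for the pieces, but is correspondingly more work.

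Your architecture is sound and your identification of the ``main obstacle'' is accurate. Be aware, though, that the auxiliary induction you sketch is more delicate than a one-sentence plan suggests. To transport a $T_k(\Gamma)$-edge inside $D_j$ to a $T_k(H)$-edge in $H=\Gamma[\mathrm{supp}_k(D_j)]$, you must first check that each witnessing level-$(k{-}1)$ component $C_i$ lies entirely inside $D_j$ (this holds, since any $g\in C_i$ is $T_k$-adjacent to $f_i\in D_j$ via $C_i$ itself), and then recursively that $C_i$ remains connected at level $k{-}1$ inside $H$. You are thus running a nested induction down through the levels, and at each step must verify that all the relevant squares, suspensions and latch-sets live inside $V(H)$; this is the real content and should be written out.

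There is also a slip at $k=1$. Your equation $\mathrm{supp}_k(\{f\})=f\cup\mathrm{susp}(f)$ is false for $k=1$ (the level-$1$ support omits suspensions), and more substantively, for \emph{non-singleton} $D_j$ your piece $\Gamma[\mathrm{supp}_1(D_j)]$ covers only $\bigcup_{f\in D_j}f$, whereas $V(\Gamma)=\mathrm{supp}_2(C)=\bigcup_{f\in C}(f\cup\mathrm{susp}(f))$ may contain extra suspension vertices. A vertex $v\in\mathrm{susp}(f)$ that lies in no non-edge of $C$ (for instance a universal vertex adjoined to a thick-of-order-$1$ graph whose square graph has no isolated non-edges) would then be missed by all your pieces. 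The remedy is the obvious one---attach cherries $\{u_1,u_2,v\}$ for such $v$, exactly as you already do in the singleton case---but the ``short case analysis'' you invoke for property~(c) must explicitly address this.
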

		
			\begin{proof} Follows immediately from the combinatorial
			characterization of thickness in right-angled Coxeter
			groups given in~\cite[Theorem
			II]{BehrstockHagenSisto:coxeter} which implies that any 
			thick graph gives a connected 
			graph whose vertices are associated to full induced 
			subgraphs which are thick of lower order and which are 
			connected by an edge if the associated graphs overlap in 
			a non-clique. The result then follows from the fact that every
			connected graph contains a spanning tree as a subgraph.
		\end{proof}
		
		\begin{proposition}\label{prop: in particular part}
			Let $k\in \mathbb{Z}_{\geq 0}$. Suppose that every thick 
			of order at most $k$ graph $\Gamma$ satisfies 
			$e(\Gamma)\geq 2v(\Gamma) - 4$ and that in
			addition~\eqref{eq: bound on union with k-thick, inductive
			statement} holds for all graphs $\Gamma_1$ and all thick
			of order at most $k$ graphs $\Gamma_2$.  Then every
			graph $\Gamma'$ that is thick of order at most
			$k+1$ satisfies 
			$e(\Gamma')\geq 2v(\Gamma')-4$.
		\end{proposition}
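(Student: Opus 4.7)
The plan is to apply Proposition~\ref{prop: characterisation of thickness} to decompose a given thick of order at most $k+1$ graph $\Gamma'$ into a family of induced subgraphs $\{\Gamma_i = (V_i, E_i)\}_{i \in [T]}$ indexed by a tree $\mathcal{T}$, with each $\Gamma_i$ either a cherry or thick of order at most $k$, and with consecutive pieces along tree edges overlapping in non-complete induced subgraphs of $\Gamma'$. I would then build up $\Gamma'$ piece by piece along the tree, invoking~\eqref{eq: bound on union with k-thick, inductive statement} at each step, and track how the quantity $e - 2v$ evolves.

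Concretely, using the ordering of tree vertices recalled in Section~\ref{section: preliminaries}, I would relabel $V(\mathcal{T}) = [T]$ so that for every $i \geq 2$ there is some $\pi(i) < i$ with $\{i, \pi(i)\} \in E(\mathcal{T})$, and define $H_i := \bigcup_{j=1}^i \Gamma_j$, so that $H_1 = \Gamma_1$ and $H_T = \Gamma'$ by property (c). The goal is then to show by induction on $i$ that $e(H_i) \geq 2v(H_i) - 4$. The base case $i = 1$ is immediate: $H_1 = \Gamma_1$ is either a copy of $K_{1,2}$, for which $e - 2v = 2 - 6 = -4$, or is thick of order at most $k$, in which case the first standing hypothesis of the proposition applies directly.

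For the inductive step from $i-1$ to $i$, I would apply~\eqref{eq: bound on union with k-thick, inductive statement} with $\Gamma_1 = H_{i-1}$ and $\Gamma_2 = \Gamma_i$ to obtain
\begin{align*}
e(H_i) \geq e(H_{i-1}) + 2\,|V_i \setminus V(H_{i-1})|,
\end{align*}
which, combined with $v(H_i) = v(H_{i-1}) + |V_i \setminus V(H_{i-1})|$, yields the desired monotonicity $e(H_i) - 2v(H_i) \geq e(H_{i-1}) - 2v(H_{i-1})$; telescoping to $i = T$ gives the conclusion. The hinge of the whole argument --- and the step I expect to be the main (mild) obstacle --- is verifying the overlap hypothesis: that $\Gamma_i[I]$ is not a clique of order at most $2$, where $I := V(H_{i-1}) \cap V_i$. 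For this I would use the tree structure: since $\{i, \pi(i)\} \in E(\mathcal{T})$, property (b) of Proposition~\ref{prop: characterisation of thickness} gives that $\Gamma'[V_{\pi(i)} \cap V_i]$ is not complete and so contains some non-edge of $\Gamma'$. As $V_{\pi(i)} \subseteq V(H_{i-1})$, this non-edge lies in $I$, and since $\Gamma_i$ is an induced subgraph of $\Gamma'$ it is also a non-edge of $\Gamma_i[I]$. Hence $\Gamma_i[I]$ is not a clique at all, so the hypothesis of~\eqref{eq: bound on union with k-thick, inductive statement} is satisfied, and one invokes the second standing hypothesis of the proposition (when $\Gamma_i$ is thick of order at most $k$) or Remark~\ref{remark: cherry easy} (when $\Gamma_i \cong K_{1,2}$) to conclude.
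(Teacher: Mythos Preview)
Your proof is correct and follows essentially the same approach as the paper: order the pieces along the tree, anchor the base case, and iterate~\eqref{eq: bound on union with k-thick, inductive statement} to accumulate $2\vert V_i\setminus V(H_{i-1})\vert$ edges at each step. One tiny imprecision: property~(c) only gives $V(H_T)=V(\Gamma')$, not $H_T=\Gamma'$, but since $E(H_T)\subseteq E(\Gamma')$ your inequality $e(H_T)\geq 2v(H_T)-4$ still yields $e(\Gamma')\geq 2v(\Gamma')-4$; the paper handles this the same way by writing $e(\Gamma')\geq e\bigl(\bigcup_j \Gamma'_j\bigr)$.
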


		\begin{proof}			
			Let $\Gamma'$ be a thick of order at most $k+1$ graph, 
			and let $\Gamma'_i=(V_i, E_i)$, $i\in [T]$ be the
			collection of induced thick of order at most $k$ subgraphs
			of $\Gamma'$ whose existence is guaranteed by
			Proposition~\ref{prop: characterisation of thickness}.
			Since $\mathcal{T}$ is a tree, we can relabel the indices
			of the $\Gamma'_i$ so that for every $j>1$ there exists
			$i\in [j-1]$ with $ij \in E(\mathcal{T})$.

			By our assumption on graphs which are thick of order at most
			$k$, we have $e(\Gamma'_1)\geq 2\vert V_1\vert -4$.  Now
			for each $j>1$, there exists $i\in [j-1]$ such that $ij\in
			E(\mathcal{T})$ and hence (by Proposition~\ref{prop:
			characterisation of thickness}.(b)) $\Gamma'[V_i\cap
			V_j]$ is non-complete.  Since $\Gamma'_j$ is thick of
			order at most $k$ or a copy of the cherry $K_{1,2}$, our assumption allows us to 
			apply~\eqref{eq: bound on union
			with k-thick, inductive statement}, which yields:
			\begin{align*}
				e\left(\left(\bigcup_{j'<j}\Gamma'_{j'}\right)\cup 
				\Gamma'_j \right)\geq e\left(\bigcup_{j'<j}\Gamma'_{j'}\right)+ 2\left\vert V_j \setminus \left(\bigcup_{j'<j} V_{j'}\right)\right\vert.
			\end{align*}
			Iterating $T-1$ times, we get the desired bound on $e(\Gamma')$:
			\begin{align*}
				e(\Gamma')&\geq e\left(\bigcup_{j=1}^T 
				\Gamma'_i\right)\geq 2\vert V_1\vert -4 + 
				\sum_{j>1}2\left\vert V_j \setminus 
				\left(\bigcup_{j'<j} V_{j'}\right)\right\vert= 2\left\vert 
				\bigcup_{j=1}^T V_i\right\vert -4= 2v(\Gamma') -4,
			\end{align*}
			where the last equality follows from
			Proposition~\ref{prop: characterisation of thickness}.(c).
		\end{proof}
		
		\noindent Thus armed, we begin our proof of Theorem~\ref{theorem: strong
		form of extremal theorem} by proving the base case
		$k=0$.

		\begin{proposition}\label{prop: thick of order 0 bound}
			Let $\Gamma$ be thick of order $0$, and let 
			$V(\Gamma)=A\sqcup B$ be a partition of its vertex-set such that neither of $\Gamma[A]$ nor $\Gamma[B]$ is a clique and
			$\Gamma=\Gamma[A]* \Gamma[B]$.
Setting $a:=\vert A \vert$ and $m:=v(\Gamma)$, we have 
\[e(\Gamma)\geq a(m-a)\geq 2(m-2)=2m-4.\]
		\end{proposition}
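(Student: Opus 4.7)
The plan is to exploit the join structure to count edges, then use the non-clique hypothesis on each part to bound the part sizes away from $1$ and $m-1$.

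First, I would observe that since $\Gamma = \Gamma[A] * \Gamma[B]$, the definition of the join guarantees that every vertex of $A$ is joined by an edge to every vertex of $B$. The edge set of $\Gamma$ therefore contains the complete bipartite graph on $A \sqcup B$ as a subgraph, and so
\[
e(\Gamma) \;\geq\; |A| \cdot |B| \;=\; a(m-a).
\]
This handles the first inequality in the chain without using anything about $\Gamma[A]$ or $\Gamma[B]$ themselves; the edges inside the parts only improve the bound.

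Next, I would note that the hypothesis that neither $\Gamma[A]$ nor $\Gamma[B]$ is a clique forces $|A| \geq 2$ and $|B| \geq 2$. Indeed, a graph on at most one vertex is vacuously a clique (and so is any edgeless graph on a single vertex), so a non-clique must contain at least two vertices and an induced non-edge between them. Consequently $2 \leq a \leq m-2$.

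Finally, I would finish by minimizing the quadratic $f(a) = a(m-a)$ on the integer interval $[2, m-2]$. The function $f$ is concave as a function of $a$, so on any closed interval it attains its minimum at an endpoint; both endpoints give $f(2) = f(m-2) = 2(m-2)$. Hence
\[
e(\Gamma) \;\geq\; a(m-a) \;\geq\; 2(m-2) \;=\; 2m-4,
\]
as required. There is no real obstacle here: the only thing to be careful about is correctly justifying $|A|, |B| \geq 2$ from the non-clique assumption, which is what pins the minimum of $a(m-a)$ at the value $2(m-2)$ rather than at a smaller value attained by an unbalanced partition.
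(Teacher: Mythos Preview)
Your proof is correct and follows essentially the same approach as the paper: use the join structure to get $e(\Gamma)\ge a(m-a)$ from the complete bipartite subgraph, use the non-clique hypothesis to force $2\le a\le m-2$, and then minimize $a(m-a)$ on that interval to obtain $2(m-2)$.
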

		\begin{proof}
	As neither of $\Gamma[A]$ and $\Gamma[B]$ is a clique, we have $2\leq a \leq m-2$. Now clearly $e(\Gamma)\geq \vert A\vert \cdot \vert B\vert =a(m-a)$, which for $a\in [2,m-2]$ is at least $2(m-2)$.
		\end{proof}

		\begin{proposition}\label{proposition: base case}
			The statement of Theorem~\ref{theorem: strong form of extremal theorem} holds for $k=0$.
		\end{proposition}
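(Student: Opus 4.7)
The plan is to verify the two conclusions of Proposition~\ref{proposition: base case} in sequence. The ``in particular'' assertion for thick-of-order-at-most-$1$ graphs will follow by feeding the inequality~\eqref{eq: bound on union with k-thick, inductive statement} (which is what the main work establishes) together with Proposition~\ref{prop: thick of order 0 bound} into Proposition~\ref{prop: in particular part} at $k=0$. By Remark~\ref{remark: cherry easy} the cherry case of~\eqref{eq: bound on union with k-thick, inductive statement} is already settled, so I only need to prove the inequality when $\Gamma_2$ is thick of order $0$. I will fix a witnessing decomposition $\Gamma_2=\Gamma_2[A]*\Gamma_2[B]$ with $\Gamma_2[A],\Gamma_2[B]$ non-cliques, so that $a:=|A|$ and $b:=|B|$ both satisfy $a,b\geq 2$.

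The key observation is that any edge of $\Gamma_2$ with at least one endpoint in $V_2\setminus V_1$ is automatically not in $\Gamma_1$, giving
\[e(\Gamma_1\cup\Gamma_2)-e(\Gamma_1)\;\geq\;e(\Gamma_2)-e(\Gamma_2[I]).\]
Setting $a_I=|A\cap I|$, $b_I=|B\cap I|$, $a_O=a-a_I$ and $b_O=b-b_I$, I discard all edges internal to $\Gamma_2[A]$ and $\Gamma_2[B]$ and keep only the (complete bipartite) join-edges; this yields $e(\Gamma_2)-e(\Gamma_2[I])\geq ab-a_Ib_I=a_Ob+a_Ib_O$. Since $|V_2\setminus V_1|=a_O+b_O$, the problem reduces to proving
\[a_O(b-2)+b_O(a_I-2)\;\geq\;0.\]

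The remaining step is a short case analysis in which the hypothesis that $\Gamma_2[I]$ is not a clique of order at most $2$ does the essential work. When $|I|\leq 2$ the hypothesis forces $|I|=2$ with both vertices of $I$ in the same part of the join (any cross-part pair is an edge), whence the inequality collapses to $(a-2)(b-2)\geq 0$. When $|I|\geq 3$ I split on $a_I$: if $a_I\geq 2$ both summands are non-negative; if $a_I=1$ then $b_I\geq 2$ forces $b_O\leq b-2$, and $a_O\geq 1$ yields $a_O(b-2)\geq b_O$; and if $a_I=0$ then $b_I\geq 3$ forces $b\geq 3$ and $b_O\leq b-3$, while $a_O=a\geq 2$, giving $a_O(b-2)\geq 2(b-2)\geq 2b_O$ (the cases with $b_I\leq 1$ are symmetric). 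I do not anticipate any real obstacle beyond this bookkeeping; the main conceptual point is simply that the condition ``$\Gamma_2[I]$ is not a clique of order at most $2$'' is precisely what rules out the configurations in which $I$ straddles the two sides of the join without containing enough of either.
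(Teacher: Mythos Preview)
Your proof is correct and follows essentially the same approach as the paper: both arguments reduce to counting the join-edges of $\Gamma_2$ that have an endpoint outside $I$, arriving at the same quantity $ab-a_Ib_I-2(a_O+b_O)$ (the paper writes it as $(b-y)(a-2)+(a-x)(y-2)$ after imposing $x\leq y$, you as $a_O(b-2)+b_O(a_I-2)$), and then dispatch it by a short case analysis in which the excluded configurations are exactly those where $\Gamma_2[I]\cong K_1$ or $K_2$. Your case split on $|I|$ and $a_I$ differs cosmetically from the paper's split on $y$, but the content is identical; the parenthetical about $b_I\leq 1$ is in fact redundant, since under $|I|\geq 3$ that forces $a_I\geq 2$, which you already handled.
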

		\begin{proof}
			We have already established in Proposition~\ref{prop: 
			thick of order 0 bound} that a thick of order~0 graph on $m$ 
			vertices must have at least $2m-4$ edges. By 
			Proposition~\ref{prop: in particular part}, it is thus enough to establish~\eqref{eq: bound on union with k-thick, inductive statement} for $k=0$. Further, by Remark~\ref{remark: cherry easy} we need not consider the case where $\Gamma_2$ is a cherry.

			Let $\Gamma_1=(V_1, E_1)$ be a graph, and let
			$\Gamma_2=(V_2, E_2)$ be a thick of order $0$ graph.  Set
			$I:= V_1\cap V_2$, and suppose this is a non-empty subset
			of $V_2$.  If $V_2\subseteq V_1$, then~\eqref{eq: bound on
			union with k-thick, inductive statement} holds trivially.
			We may thus assume $V_2\setminus V_1$ is non-empty.  
			Since $\Gamma_2$ is thick of order zero it admits a 
			partition $V(\Gamma_2)=A\sqcup B$ where 
			$\Gamma_2[A]$ and $\Gamma_2[B]$ are both non-complete and
			$\Gamma_2=\Gamma_2[A]* \Gamma_2[B]$.
			Denote $a:=\vert A\vert$, $b:=\vert B\vert$, $x:=\vert A\cap
			I\vert$ and $y:=\vert B\cap I\vert$.  Assume without loss
			of generality that $x\leq y$.

			Then we have the following
			key inequality:		
			\begin{align}
				e(\Gamma_1\cup \Gamma_2)-e(\Gamma_1)-2\vert V_2\setminus V_1\vert &\geq  \vert B\setminus I\vert \left(\vert A\vert -2 \right)+ \vert A\setminus I\vert \left( \vert B\cap I\vert -2\right)\notag \\ &= (b-y)(a-2)+(a-x)(y-2)\label{eq: thick of order zero intersection}.
			\end{align}
	\noindent To see why this inequality holds note the following: every vertex in $A$ sends edges to
	every vertex in $B\setminus I$, giving us a first set of $\vert
	B\setminus I\vert\cdot \vert A\vert$ edges; every vertex in
	$B\cap I$ sends edges to every vertex in $A\setminus I$, giving us
	a second set of $\vert A\setminus I\vert \cdot \vert B\cap I\vert
	$ edges disjoint from the first; lastly, none of the edges in
	these two sets belong to $\Gamma_1$ since all of them are incident
	to a vertex of $(A\sqcup B)\setminus I=V(\Gamma_2)\setminus
	V(\Gamma_1)$.

			To conclude the proof, it suffices to show that either $\Gamma_2[I]\cong K_{\epsilon}$ for some $\epsilon\in\{1,2\}$ or that the expression on the right hand side of~\eqref{eq: thick of order zero intersection} is non-negative.  By thickness of order $0$, we know that $a=\vert A\vert$ and $b=\vert B\vert$ are both at least $2$, and we also have $a\geq x$ and $b\geq y$.  In particular the first term in~\eqref{eq: thick of order zero intersection} above is always non-negative.

			Consider now the second term. We have $2y \geq x+y = \vert I\vert >0$, whence $y>0$. If $y=1$ and $x=0$, then $\Gamma_2[I]\cong K_1$ and we are done.
			Similarly if $y=1$ and $x=1$, then $\Gamma_2[I]\cong K_2$ (since $\Gamma_2$ contains as a subgraph the complete bipartite graph on the bipartition $A\sqcup B$) and we are done.
			Finally if $y\geq 2$, then $(b-y)(a-2)+(a-x)(y-2) \geq 0$, as desired. The proposition follows.
		\end{proof}
		\begin{proof}[Proof of Theorem~\ref{theorem: strong form of extremal theorem}]
			We perform induction on $k$. We established the base case 
			$k=0$ in Proposition~\ref{proposition: base case}. For 
			the inductive step, assume we have proved the theorem for 
			all $k\leq K$, for some $K\geq 0$. By 
			Proposition~\ref{prop: in particular part}, it suffices then to show~\eqref{eq: bound on union with k-thick, inductive statement} holds for $k=K+1$.

			Let $\Gamma_1=(V_1, E_1)$ be an arbitrary graph, and let $\Gamma_2=(V_2, E_2)$ be a thick of order $K+1$ graph. Set $I= V_1\cap V_2$, and suppose this is a non-empty subset of $V_2$ with $\Gamma_2[I]\not \cong K_1$, $K_2$. Note first of all that if $\vert V_2 \setminus V_1\vert =0$, then~\eqref{eq: bound on union with k-thick, inductive statement} holds trivially. We may thus assume that $V_2\setminus V_1$ is non-empty.

			Applying Proposition~\ref{prop: characterisation of 
			thickness} to $\Gamma_2$, provides a collection of induced subgraphs $\Gamma_{2,i}=(V_{2,i}, E_{2,i})$, $i\in [T]$,  of our graph $\Gamma_2$, each of which is either a $K_{1,2}$ or thick of order at most $K$, together with a tree $\mathcal{T}$ on $[T]$ satisfying properties (a)---(c) from the statement of Proposition~\ref{prop: characterisation of thickness}. Here we must consider two cases.

			\noindent \textbf{Case 1:}  suppose first of all that there is some $i_0$ such that 
			\begin{align}\label{eq: good condition on union}
				e(\Gamma_1\cup \Gamma_{2,i_0})\geq e(\Gamma_1) +2\vert V_{2,i_0}\setminus V_1\vert.
			\end{align}
			Reordering the indices of the $\Gamma_{2,i}$ as necessary, we may assume that $i_0=1$ and that for every $j>1$ there exists $i\in [j-1]$ with $ij\in E(\mathcal{T})$  (the existence of such an ordering is implied by the fact that $\mathcal{T}$ is a tree), which in turn implies $\Gamma_2[V_{2,i}\cap V_{2,j}]$ is non-complete (by property (b)). In particular, we must have that $\Gamma_2\left[\left(V_1\cup \bigcup_{i<j}V_{2,i}\right)\cap V_{2,j}\right]$ is a graph on at least two vertices not isomorphic to $K_1$ or $K_2$. Applying our inductive hypothesis $T-1$ times and appealing to~\eqref{eq: good condition on union}, we conclude that
			\begin{align*}
				e\left(\Gamma_1\cup \Gamma_2\right)\geq e\left(\Gamma_1\cup \left(\bigcup_j \Gamma_{2,j}\right)\right)
				&\geq e\left(\Gamma_1\cup \Gamma_{2,1}\right) +\sum_{j>1}  2\left \vert V_{2,j}\setminus \left(V_1\cup \left(\bigcup_{j'<j} V_{2,j'}\right)\right)\right \vert \\
				&\geq e(\Gamma_1) + 2\left\vert \bigcup_{j\geq 1} V_{2,j}\setminus V_1
				\right\vert= e(\Gamma_1)  +2\vert V_2\setminus V_1\vert,
			\end{align*}
			and~\eqref{eq: bound on union with k-thick, inductive statement} holds as required.

			\noindent \textbf{Case 2:}  suppose that~\eqref{eq: good condition on union} does not hold for any $i_0\in [T]$. By our inductive hypothesis this implies the following:
			\begin{align*}
				(\star)  && \textrm{for every }i\in [T] \textrm{, }\Gamma_{2}[V_1\cap V_{2,i}]\textrm{ is a clique on at most two vertices}
			\end{align*}
			By property (c), there exists some $i_0$ such that $V_{2, i_0}\cap V_1$ is a non-empty subset of $I=V_1\cap V_2$. Reordering the indices of the $\Gamma_{2,i}$ as necessary, we may assume that $i_0=1$, that $\vert V_1\cap V_{2, 1} \vert \geq \vert V_1\cap V_{2,i}\vert $ for all $i\in [T]$, and that for every $j>1$ there exists $i\in [j-1]$ with $ij\in E(\mathcal{T})$.

			Now $\Gamma_2[V_{2,1}\cap V_1]=\Gamma_{2,1}[V_{2,1}\cap V_1]$, which by $(\star)$ above is a clique on at least one and at most two vertices. Since  $\Gamma_2[V_{2}\cap V_1]$ is not a clique on at most two vertices, it follows by property (c) again that there is some $j>1$ such that $\left(V_{2,j}\cap V_1 \right)\setminus \left(V_{2,1}\cap V_1 \right)$ is non-empty. Let $j_0$ be the least such $j$.

			Consider now the graph $\Gamma_{2, [j_0]}:=\bigcup_{i=1}^{j_0} \Gamma_{2,i}$, and write $V_{2,[j_0]}:=\bigcup_{i=1}^{j_0}V_{2,i}$ for its vertex set. Since for every $j>1$ there exists $i\in [j-1]$ with $ij\in E(\mathcal{T})$, it follows from property (b) and the definition of thickness that $\Gamma_{2, [j_0]}$ is thick of order at most $K+1$. By the `in particular' part of our inductive hypothesis, we have
			\begin{align}\label{eq: bound when first circle back}
				e(\Gamma_{2, [j_0]})\geq 2\left \vert V_{2,[j_0]}\right \vert -4. 
			\end{align}
			Set $I_0:= V_1\cap V_{2, 1}$ and $I_1:=V_1\cap V_{2,j_0}$, so that $V_1\cap V_{2, [j_0]}= I_0\cup I_1$ by the minimality of $j_0$. Further we have $I_0\setminus I_1$ and $I_1\setminus I_0$ both non-empty --- indeed, the latter follows by definition of $j_0$, and the former from the maximality of $\vert V_1\cap V_{2,i}\vert$.

			Since the elements of  $I_1\setminus I_0$ do not appear in any $V_{2,i}$ with $i\in [j_0-1]$, and since the elements of $I_0\setminus I_1$ do not belong to $V_{2,j_0}$ it follows that there is no edge from $I_1\setminus I_0$ to $I_0\setminus I_1$ in $\Gamma_{2, [j_0]}$. Indeed, note the vertices in $I_1\setminus I_0$ belong to none of the $\Gamma_{2, j}$ for $j<j_0$, and so are incident to no edges in these graphs, while the vertices in $I_0\setminus I_1$ do not belong to $\Gamma_{2,j_0}$, and so cannot send edges to $I_1\setminus I_0$ in that graph.
			
			By the inclusion-exclusion principle and inequality~\eqref{eq: bound when first circle back}, we have 
			\begin{align}
				e(\Gamma_1\cup \Gamma_{2, [j_0]})&\geq 
				e(\Gamma_1)+e(\Gamma_{2, [j_0]}) - e\left(\Gamma_{2, [j_0]}[I_0\cup I_1]\right)\notag \\
				&\geq e(\Gamma_1) +2\left\vert V_{2, 
				[j_0]}\right\vert-4 - e\left(\Gamma_{2, [j_0]}[I_0\cup I_1]\right)\notag\\
				&\geq e(\Gamma_1)+ 2\left\vert V_{2, [j_0]}\right\vert-4 - \binom{\vert I_0 \cup I_1\vert}{2}+\vert I_0\setminus I_1\vert \cdot \vert I_1\setminus I_0\vert.	\label{eq: union bound when circled back}
			\end{align}

			\begin{claim}\label{claim: final nail in the extremal coffin}
				$2\vert V_{2, [j_0]}\vert -4 - \binom{\vert I_0 \cup I_1\vert}{2}+\vert I_0\setminus I_1\vert \cdot \vert I_1\setminus I_0\vert\geq 2 \vert V_{2,[j_0]}\setminus V_1\vert $.
			\end{claim}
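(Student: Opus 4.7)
The plan is to reduce the claim to an elementary inequality in a few non-negative integer parameters, using $(\star)$ and the properties of $I_0, I_1$ already established above the claim. Let me set $a:=\vert I_0\setminus I_1\vert$, $b:=\vert I_1\setminus I_0\vert$, $c:=\vert I_0\cap I_1\vert$, $s:=\vert I_0\cup I_1\vert = a+b+c$, and $v:=\vert V_{2,[j_0]}\vert$. Since $V_1\cap V_{2,[j_0]} = I_0\cup I_1$, we have $\vert V_{2,[j_0]}\setminus V_1\vert = v-s$, and the claim we must prove rewrites (after cancelling $2v$) as
\begin{equation}\label{eq: target}
2s + ab - \binom{s}{2} \geq 4.
\end{equation}

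Next, I would extract bounds on $a, b, c$ from the hypotheses. By $(\star)$, the induced subgraphs $\Gamma_2[I_0]=\Gamma_2[V_1\cap V_{2,1}]$ and $\Gamma_2[I_1]=\Gamma_2[V_1\cap V_{2,j_0}]$ are each cliques on at most two vertices, so
\[
a+c = \vert I_0\vert \leq 2 \qquad \text{and} \qquad b+c = \vert I_1\vert \leq 2.
\]
Moreover, the text immediately before the claim establishes that both $I_0\setminus I_1$ and $I_1\setminus I_0$ are non-empty, which gives $a\geq 1$ and $b\geq 1$. Assuming without loss of generality (by the symmetry of \eqref{eq: target} in $a$ and $b$) that $a\geq b$, these constraints leave only four possible triples $(a,b,c)$.

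Finally, I would verify \eqref{eq: target} by a direct case check on these four triples:
\[
\begin{array}{c|c|c}
(a,b,c) & s & 2s+ab-\binom{s}{2} \\ \hline
(1,1,0) & 2 & 4+1-1 = 4 \\
(1,1,1) & 3 & 6+1-3 = 4 \\
(2,1,0) & 3 & 6+2-3 = 5 \\
(2,2,0) & 4 & 8+4-6 = 6
\end{array}
\]
In every case the left-hand side of \eqref{eq: target} is at least $4$, establishing the claim. I do not expect any serious obstacle here: once the correct change of variables is applied, the claim reduces to a handful of tight arithmetic checks, with the $(1,1,0)$ and $(1,1,1)$ cases achieving equality and corresponding to the two extremal configurations for gluing (along a single vertex, or along a single non-edge with one shared endpoint) anticipated by the extremal examples described after Theorem~\ref{theorem: combinatorial characterisation of thickness}.
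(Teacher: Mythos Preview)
Your proof is correct and follows essentially the same approach as the paper: reduce the claim to a finite case check using the constraints $\vert I_0\vert,\vert I_1\vert\leq 2$ from $(\star)$ together with the non-emptiness of $I_0\setminus I_1$ and $I_1\setminus I_0$. The only cosmetic difference is that the paper organises the cases by the single parameter $t=\vert I_0\cup I_1\vert\in\{2,3,4\}$ (using the crude bound $ab\geq 1$ when $t\leq 3$), whereas you parameterise more finely by $(a,b,c)$ and check four cases explicitly.
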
	
			\begin{proof}
				Set $t:=\vert I_0\cup I_1\vert$ and $N:= \vert V_{2,[j_0]}\vert$, so that $\vert V_{2,[j_0]}\setminus V_1\vert= N-t$.

				Since $I_0$ and $I_1$ have size at least $1$ and at most $2$ (by $(\star)$), and since $I_0\Delta I_1\neq \emptyset$ we have $2\leq t\leq 4$. Furthermore, $t=4$ is possible if and only $I_0$ and $I_1$ are disjoint sets of size two.

				If $2\leq t\leq 3$, then we have 
				\begin{align*}
					2\vert V_{2, [j_0]}\vert -4 - \binom{\vert I_0 \cup I_1\vert}{2}+\vert I_0\setminus I_1\vert \cdot \vert I_1\setminus I_0\vert\geq 2N-4 -\binom{t}{2}+1= 2(N-t)-\frac{(t-2)(t-3)}{2},
				\end{align*}
				which is equal to $2(N-t)$ as desired. On the other hand if $t=4$, then we have
				\begin{align*}
					2\vert V_{2, [j_0]}\vert -4 - \binom{\vert I_0 \cup I_1\vert}{2}+\vert I_0\setminus I_1\vert \cdot \vert I_1\setminus I_0\vert= 2N-4 -\binom{4}{2}+4= 2(N-4) +2>2(N-4),
				\end{align*}
				as required. The claim follows.
			\end{proof}
			\noindent Combining Inequality~\eqref{eq: union bound when circled back} and Claim~\ref{claim: final nail in the extremal coffin}, we deduce that
			\begin{align}\label{eq: consequence of final nail}
				e(\Gamma_1\cup \Gamma_{2, [j_0]}) \geq e(\Gamma_1)+2 \vert V_{2,[j_0]}\setminus V_1\vert.
			\end{align}
			We can now conclude the proof of this case much as we did in Case 1: for every $j>j_0$ there exists $i\in [j-1]$ with $ij\in E(\mathcal{T})$, which in turn implies $\Gamma_2[V_{2,i}\cap V_{2,j}]$ is non-complete. In particular, we must have that $\Gamma_2\left[\left(V_1\cup \bigcup_{i<j}V_{2,i}\right)\cap V_{2,j}\right]$ is a graph on at least two vertices not isomorphic to $K_1$ or $K_2$. Applying our inductive hypothesis $T-j_0$ times and appealing to~\eqref{eq: consequence of final nail}, we conclude that
			\begin{align*}
				e\left(\Gamma_1\cup \Gamma_2\right)\geq e\left(\Gamma_1\cup \left(\bigcup_j \Gamma_{2,j}\right)\right)
				&\geq e\left(\Gamma_1\cup \Gamma_{2,[j_0]}\right) +\sum_{j>j_0}  2\left \vert V_{2,j}\setminus \left(V_1\cup \left(\bigcup_{j'<j} V_{2,j'}\right)\right)\right \vert \\
				&\geq e(\Gamma_1) + 2\left\vert \bigcup_{j\geq 1} V_{2,j}\setminus V_1
				\right\vert= e(\Gamma_1)  + 2\vert V_2\setminus V_1\vert,
			\end{align*}
			and~\eqref{eq: bound on union with k-thick, inductive statement} holds as required.
			
		\end{proof}

\section{Thresholds in random right-angled Coxeter groups}\label{section: thickness}

\subsection{Relative hyperbolicity: proof of Theorem~\ref{theorem: main}}\label{section: hyperbolicity threshold}

The key novelty in this paper is in Theorem~\ref{theorem: main} part
(i) and its proof, which we shall now give.  By results of
Behrstock Hagen and Sisto~\cite{BehrstockHagenSisto:coxeter} and
Levcovitz~\cite{Levcovitz:QIinvariantThickness} discussed in the
introduction, a RACG $W_{\Gamma}$ is relatively hyperbolic if and only
if its presentation graph $\Gamma$ fails to be thick.  Thus part (i)
of Theorem~\ref{theorem: main} is implied by the following stronger
theorem giving upper bounds on the order of thick components in
$\Gamma\sim \mathcal{G}_{n,p}$.

\begin{theorem}\label{theorem: lower bound}
	Let $p=p(n)\leq \frac{1}{4\sqrt{n\log n}}$ and $\Gamma\sim 	
	\mathcal{G}_{n,p}$.  Then a.a.s.\  for every $k\in \mathbb{N}$, every component in $T_k(\Gamma)$ has support of size at most $\log n$. In particular, $\Gamma$ is a.a.s.\ not thick of order~$k$.
\end{theorem}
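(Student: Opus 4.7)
The plan is to combine the extremal bound of Theorem~\ref{theorem: combinatorial characterisation of thickness}---any $m$-vertex thick graph has at least $2m-4$ edges---with a first moment argument over vertex subsets of $\Gamma$. My key observation is: if $C$ is a component of $T_k(\Gamma)$ with support $S = \mathrm{supp}_k(C)$, then the induced subgraph $\Gamma[S]$ is itself thick of order at most $k$, since all non-edges, suspensions, and latch-sets witnessing the thickness structure of $C$ lie inside $S$. Applying Theorem~\ref{theorem: combinatorial characterisation of thickness} to $\Gamma[S]$ yields $|E(\Gamma[S])| \geq 2|S|-4$. Hence it is enough to show that a.a.s.\ no $S \subseteq [n]$ with $|S| > \log n$ satisfies this edge-count condition.

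For each $m > \log n$, I would then apply the standard union bound
\[
\mathbb{P}(\exists S,\ |S| = m,\ e(\Gamma[S]) \geq 2m-4) \;\leq\; \binom{n}{m}\binom{\binom{m}{2}}{2m-4}\, p^{2m-4},
\]
and sum the right-hand side over $m$ to aim for $o(1)$. Substituting $p \leq 1/(4\sqrt{n\log n})$ and using $\binom{a}{b} \leq (ea/b)^b$, the extra $\sqrt{\log n}$ factor of headroom between our density and the thickness threshold $1/\sqrt{n}$ is what should provide the slack making this calculation go through.

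The main obstacle will be extending the first-moment bound uniformly across the whole range $\log n < m \leq n$: once $m \gtrsim \sqrt{n\log n}$, the expected edge count $\binom{m}{2}p$ already exceeds $2m-4$, so the pure extremal input $|E(\Gamma[S])| \geq 2|S|-4$ becomes a near-certain event and the naive bound stops converging. To handle this regime I would supplement the first-moment argument with structural constraints from $\Gamma \sim \mathcal{G}_{n,p}$---most naturally the a.a.s.\ bound $\Delta(\Gamma) = O(\sqrt{n/\log n})$ on the maximum degree, which severely constrains the sizes of the thick-of-order-$0$ joins $\Gamma[A] \ast \Gamma[B]$ appearing as the atomic pieces in the tree decomposition of Proposition~\ref{prop: characterisation of thickness} (since the small side $A$ forces $|B| \leq \Delta(\Gamma)$). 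A careful accounting combining this degree restriction with the rules for gluing the tree pieces along non-cliques should rule out large thick induced subgraphs. Since the whole argument is uniform in $k$, the conclusion then follows: a.a.s.\ every component of $T_k(\Gamma)$ for every $k$ has support of size at most $\log n$, and as $n > \log n$ this in particular forces $\Gamma$ not to be thick of any finite order.
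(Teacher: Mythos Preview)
Your overall strategy---combine Theorem~\ref{theorem: combinatorial characterisation of thickness} with a first-moment bound---is exactly the paper's, but you have missed the one idea that makes the first-moment calculation go through, and your proposed workaround via degree bounds is both vague and unnecessary.

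The gap is the one you yourself flag: the union bound
\[
\sum_{m>\log n}\binom{n}{m}\binom{\binom{m}{2}}{2m-4}p^{2m-4}
\]
does \emph{not} tend to zero over the full range $\log n<m\leq n$, because for $m$ of order $\sqrt{n\log n}$ or larger the event $e(\Gamma[S])\geq 2m-4$ is typical. Your suggestion to patch this with $\Delta(\Gamma)=O(\sqrt{n/\log n})$ and a ``careful accounting'' of the tree decomposition is not a proof; in particular it is not clear how the degree bound alone rules out thick induced subgraphs of all large sizes, since the pieces in Proposition~\ref{prop: characterisation of thickness} can be chained arbitrarily.

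The paper's fix is a simple window trick: one only needs the first-moment bound on the narrow range $m\in[\log n,2\log n]$ (Proposition~\ref{prop: no dense sets}), where the calculation does succeed. The reason this suffices is the inductive gluing structure of thickness. Assume inductively that every level-$k$ component has support of size at most $\log n$. A level-$(k{+}1)$ component $C$ is obtained by successively gluing level-$k$ pieces $C'_1,C'_2,\ldots$ (or cherries), each with support of size at most $\log n$, along non-cliques; at every stage the running union $U_i=\bigcup_{j\leq i}\mathrm{supp}(C'_j)$ induces a thick subgraph. Since $|U_{i+1}|<|U_i|+\log n$, if $|\mathrm{supp}(C)|>\log n$ then some $U_{i_0}$ has size $m\in[\log n,2\log n]$, and $\Gamma[U_{i_0}]$ is thick with $e(\Gamma[U_{i_0}])\geq 2m-4$, contradicting the a.a.s.\ event from the window bound. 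This replaces your open-ended sum over all $m$ with a finite check on a logarithmic window, and no appeal to maximum-degree bounds is needed.
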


\noindent We prove Theorem~\ref{theorem: lower bound} in an inductive 
fashion by combining our extremal result, Theorem~\ref{theorem: combinatorial characterisation of thickness}, with the following simple result about random graphs.
\begin{proposition}\label{prop: no dense sets}
	Let $p=p(n)\leq \frac{1}{4\sqrt{n\log n}}$ and $\Gamma\sim \mathcal{G}_{n,p}$. Then a.a.s.\ for every $m \in [\log n, 2\log n]$, every $m$--vertex subset of $\Gamma$ supports at most $2m-5$ edges.
\end{proposition}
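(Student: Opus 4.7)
The plan is a first moment / union bound argument. For each integer $m \in [\log n, 2\log n]$, let $X_m$ denote the number of $m$-vertex subsets $S \subseteq [n]$ whose induced subgraph in $\Gamma$ contains at least $2m-4$ edges. Since the event I wish to avoid is $\bigcup_m \{X_m \geq 1\}$, it suffices by Markov's inequality to show $\sum_m \mathbb{E}[X_m] = o(1)$.

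By a union bound over possible sets of $2m-4$ edges in a given $m$-subset,
\[
\mathbb{E}[X_m] \;\leq\; \binom{n}{m}\binom{\binom{m}{2}}{2m-4} p^{2m-4}.
\]
I would bound each binomial via $\binom{a}{b}\leq (ea/b)^b$, using the asymptotic identity $\binom{m}{2}/(2m-4) = m(m-1)/(4(m-2)) \sim m/4$ for the second, then substitute the hypothesized upper bound on $p$ and simplify. Parameterizing $m = c\log n$ with $c \in [1,2]$ and taking logarithms, the dominant $(\log n)^2$ and $\log n \cdot \log\log n$ contributions cancel (a cancellation forced by the specific form $p = \Theta(1/\sqrt{n\log n})$), leaving a coefficient of $\log n$ of the shape $c(3 + \log c - 4\log 4)$. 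A direct numerical check shows this quantity is bounded above by $-2.5$ throughout $c \in [1,2]$, giving $\mathbb{E}[X_m] \leq n^{-2 + o(1)}$ uniformly. Summing over the $O(\log n)$ values of $m$ in the range then yields $\sum_m \mathbb{E}[X_m] = o(1)$, completing the argument.

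The main technical subtlety is the constant tracking in the first-moment estimate: after the leading cancellations, the outcome depends on $O(\log n)$-level contributions. Critically, the weaker bound $\binom{m}{2}/(2m-4) \leq m/2$ (which is valid for all $m \geq 3$) would turn the resulting coefficient of $\log n$ \emph{positive}, and the first-moment method would fail to decay. One must therefore use the asymptotically sharper $\sim m/4$, absorbing the resulting error term using the fact that $m \geq \log n \to \infty$ as $n \to \infty$. Apart from this bookkeeping the proof is routine.
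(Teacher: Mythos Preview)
Your approach is exactly the paper's: bound $\sum_m \mathbb{E}X_m$ via $\binom{n}{m}\binom{\binom{m}{2}}{2m-4}p^{2m-4}$, apply $\binom{a}{b}\le (ea/b)^b$, substitute $p=1/(4\sqrt{n\log n})$, and check that the resulting expression decays; the paper uses the slightly cruder $\binom{m}{2}/(2m-4)\le 3m/(4e)$ in place of your $\sim m/4$, arriving at $\left(\tfrac{9em}{256\log n}\right)^m\left(\tfrac{16n\log n}{m^2}\right)^{2}$ and verifying the first factor beats $n^{-2}$ on $m\in[\log n,2\log n]$.

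One bookkeeping slip to fix: your stated coefficient of $\log n$, namely $c(3+\log c-4\log 4)$, treats the exponent as $2m$ rather than $2m-4$. The missing $p^{-4}$ contributes an additional $+2\log n$ (plus lower-order terms), so the true exponent is $c(3+\log c-4\log 4)+2$, whose maximum on $[1,2]$ is about $-0.545$ at $c=1$, not $-2.5$. This is still negative, so $\mathbb{E}[X_m]\le n^{-1/2+o(1)}$ uniformly and the conclusion stands, but your intermediate claim $\mathbb{E}[X_m]\le n^{-2+o(1)}$ is not quite right. (This is precisely the role of the factor $\left(\tfrac{16n\log n}{m^2}\right)^2$ in the paper's display.) Your remark that the weaker bound $\binom{m}{2}/(2m-4)\le m/2$ would break the argument is correct once this $+2$ is included.
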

\begin{proof}
Let $X_m$ denote the number of $m$--vertex subsets in $V(\Gamma)$ supporting at least $2m-4$ edges in $\Gamma$. Set $X:=\sum_{m\in [\log n, 2\log n]}X_m$. Then we have for all $n$ sufficiently large

\begin{align*}
\mathbb{E}(X)&=\sum_{m=\lceil \log n\rceil}^{\lfloor 2\log n\rfloor } X_m\leq \sum_{m=\lceil \log n\rceil}^{\lfloor 2\log n\rfloor }  \binom{n}{m}\binom{\binom{m}{2}}{2m-4}p^{2m-4} \\
&\textrm{(union bound over all $m$-sets and all possible choices of $2m-4$ edges inside an $m$-set)}\\
&\leq \sum_{m=\lceil \log n\rceil}^{\lfloor 2\log n\rfloor } \left(\frac{ne}{m}\right)^m  \left(\frac{3mp}{4}\right)^{2m-4} \\
&\textrm{(using the inequalities $\binom{N}{r}\leq \left(\frac{eN}{r}\right)^r$ and $2m-4\geq \frac{2e}{3}m$ for $m\geq \log n \geq 22$)}\\
&\leq  \sum_{m=\lceil \log n\rceil}^{\lfloor 2\log n\rfloor } \left(\frac{9em}{256 \log n}\right)^m \left(\frac{256 n \log n}{9m^2}\right)^2  \quad \textrm{(substituting in our bound on $p$)}\\
& \leq  \sum_{m=\lceil \log n\rceil}^{\lfloor 2\log n\rfloor } e^{-2\log n}   \left(\frac{256 n }{9\log n}\right)^2 =  O\left(\frac{1}{\log n}\right),
\end{align*}
where in the last inequality we used the fact that the function
\[x\mapsto x \log \left(\frac{256}{9ex}\right)=x\left(8\log(2) -2\log(3)-1- \log x\right) \] 
is strictly greater than $2$ in the interval $x\in [1,2]$, whence  $\left(\frac{9em}{256 \log n}\right)^{m} 
< e^{-2\log n}$ for all $m$ with $\frac{m}{\log n} \in [1,2]$. It follows 
from Markov's inequality that a.a.s.\ $X=0$ and thus there is no $m$--vertex subset of $V(\Gamma)$ supporting strictly more than $2m-5$ edges of $\Gamma$ for any $m\in [\log n, 2\log n]$, as claimed.
\end{proof}
\begin{proof}[Proof of Theorem~\ref{theorem: lower bound}]
Let $\mathcal{E}$ denote the event that for every $m\in [\log n, 
2\log n]$, every $m$--set of vertices in $\Gamma$ supports at most $2m-5$ edges. By Proposition~\ref{prop: no dense sets}, $\mathcal{E}$ occurs a.a.s., so it is enough to show the conclusions of the theorem hold conditional on $\mathcal{E}$.

Suppose therefore that $\mathcal{E}$ occurs. We shall then prove by induction on $k \geq 0$ that every component of $T_k(\Gamma)$ has support of size at most $\log n$ (which implies both parts of Theorem~\ref{theorem: lower bound}).

\underline{For the base case $k=0$}, suppose that $\Gamma$ contains an
induced thick of order $0$ subgraph on more than $\log n$ vertices.
Let $A\sqcup B$ be any partition of this induced subgraph such
that $\Gamma[A]$ and $\Gamma[B]$ are both non-complete and
$\Gamma[A\sqcup B]=\Gamma[A]*\Gamma[B]$.  Then, provided $n$ is
sufficiently large, there exists $A'\subseteq A$ and $B'\subseteq B$
such that $\Gamma[A']$, $\Gamma[B']$ are both non-complete,
$\Gamma[A'\sqcup B']=\Gamma[A']*\Gamma[B']$ and $\log n\leq \vert
A'\vert +\vert B'\vert \leq 2\log n$.  In other words,
$\Gamma[A'\sqcup B']$ is an induced thick of order $0$ subgraph of
$\Gamma$ on $m$ vertices, for some $m$ with $\log n \leq m \leq 2\log
n$.  By Proposition~\ref{prop: thick of order 0 bound}, we have
$e(\Gamma[A'\sqcup B'])\geq 2m-4$, contradicting our assumption that
$\mathcal{E}$ holds.  Thus $T_0(\Gamma)$ contains no level $0$
component supported on more than $\log n$ vertices, as required.


\underline{For the inductive step}, suppose a component $C$ in $T_{k+1}(\Gamma)$ has support of size at least $\log n$.  Observe that $C$ is obtained by successively gluing together components or cherries $C'_1, C'_2, \ldots $ from $T_{k}(\Gamma)$, each of which has support of size at most $\log n$, and that the sequence of gluing can be done in such a way that for every $i$, the subgraph of $\Gamma$ induced by the union $U_i:=\bigcup_{j=1}^i\mathrm{supp}(C'_j)$ is thick of order at most $k+1$.

 In particular,  we must have $\vert U_{i+1}\vert < \vert U_i\vert +\log n$ for every $i$, and hence there must be a least $i_0$ such that $\vert U_{i_0}\vert >\log n$ satisfying in addition $m:=\vert U_{i_0}\vert <2\log n$. Now the $m$--vertex induced subgraph $\Gamma[U_{i_0}]$ is thick of order at most $k+1$, and hence by Theorem~\ref{theorem: combinatorial characterisation of thickness} must support at least $2m-4$ edges. Since $m\in [\log n, 2\log n]$, this again contradicts our assumption that $\mathcal{E}$ holds.

It follows by induction that, conditional on the a.a.s.\ event
$\mathcal{E}$, for all integers $k \geq 1$, $T_k(\Gamma)$ contains no
component with support of size greater than or equal to $\log n$.  In
particular $\Gamma$ is a.a.s.\ not thick.\end{proof}

Part (ii) of Theorem~\ref{theorem: main} follows directly from
Theorem~\ref{theorem: threshold for thickness of order 2 below that
for order 1} part (i), which is proven below.  (In fact, for $p \geq
(\sqrt{\sqrt{6}-2}+\varepsilon)/\sqrt{n}$, the statement of part (ii)
already follows from Theorem~\ref{theorem: threshold order 1
thickness} part (ii), so we only appeal to Theorem~\ref{theorem:
threshold for thickness of order 2 below that for order 1} to remove
the $\varepsilon$.)

\subsection{Thickness of order two: proof of Theorem~\ref{theorem:
threshold for thickness of order 2 below that for order
1}}\label{section: order 2 thickness} 

To prove Theorem~\ref{theorem:
threshold for thickness of order 2 below that for order 1}, we 
will employ a twist on the 
argument used in~\cite{behrstock2022square} to prove
Theorem~\ref{theorem: threshold order 1 thickness}. 
The crux of the proof of Theorem~\ref{theorem:
threshold order 1 thickness} lay in the analysis of an exploration
process for the square graph $T_1(\Gamma)$ and its comparison with a
supercritical Bienaym\'e--Galton--Watson branching process. We  
describe that exploration process below and explain how a slight modification of it allows us to explore thick of order $2$ rather than thick of order $1$ components, and to keep the associated branching process supercritical for $p$ a little below the threshold for thickness of order $1$.

The exploration process introduced in~\cite[Section 6.1]{behrstock2022square} is as follows. Starting at time $t=0$ from an induced square of $\Gamma\sim \mathcal{G}_{n,p}$ on $\{v_1,v_2,v_3,v_4\}$ with non-edges $v_1v_3$ and $v_2v_4$ one defines a set of discovered vertices $D_0=\{v_1, v_2, v_3, v_4\}$, an (ordered) set of active pairs $A_0=\{v_1v_3, v_2v_4\}$ and a set of reached pairs $R_0=\emptyset$. At every time $t\geq 0$, $D_t$ is a subsets of $V(\Gamma)$, while $A_t$ and $R_t$ are disjoint subsets of $E(\overline{\Gamma})\cap (D_t)^{(2)}$, i.e.\ $A_t$ and $R_t$ are disjoint subsets of non-edges in the subgraph of $\Gamma$ induced by the set of discovered vertices $D_t$.

At each time step $t\geq 0$ of our exploration process, we proceed as follows: 
\begin{enumerate}[1.]
	\item \textbf{If $\vert R_t\vert +\vert A_t\vert$ is large}, meaning 
	$\vert R_t\vert +\vert A_t\vert > (\log n)^4$, then we terminate the process and output $\mathrm{LARGE\ STOP}$.
	\item \textbf{If 
	there are no active pairs left} (i.e.\ if 
	$A_t=\emptyset$), then we terminate the process and output $\mathrm{EXTINCTION\ STOP}$.
	\item \textbf{Otherwise}, we select the first active pair
	$x_1y_1\in A_t$, which by construction induces a $C_4$ in $\Gamma$
	with some pair $F_t\in A_t\cup R_t$.  For every undiscovered
	vertex $z\in V(\Gamma)\setminus D_t$, we test whether or not $z$
	sends an edge of $\Gamma$ to both of $x_1$ and $y_1$; in this way
	we form a set $Z_t:=\{z\in V(\Gamma): \ \{x_1z, y_1z\} \subset
	E(\Gamma)\}$.

	Finally we update our triple $(D_t, A_t, R_t)$ by setting $D_{t+1}=D_t\cup Z_t$, $A_{t+1}= \left(A_t\setminus \{x_1y_1\}\right)\cup \left( (F_t\cup Z_t)^{(2)}\setminus \left({F_t}^{(2)}\cup E(\Gamma)\right)\right)$ and $R_{t+1}=R_t\cup \{x_1y_1\}$.
\end{enumerate}
Note that our update rule under 3.\ above ensures $A_t\cup R_t$ is a collection of non-edges of $\Gamma$ which lie in the same component of the square graph $T_1(\Gamma)$ and in particular preserves the property that every pair in $A_t$ induces a square with some pair $F_t\in D_t^{(2)}$.

The key result in~\cite[Section 6.2]{behrstock2022square} is that for any $\varepsilon>0$ fixed and any $\lambda=\lambda(n)$ satisfying $\sqrt{\sqrt{6}-2}+\varepsilon\leq \lambda \leq 5\sqrt{\log n}$, for $p= \lambda/\sqrt{n}$ the above exploration process is supercritical, and that a.a.s.\ a strictly positive proportion of induced squares in $\Gamma$ are part of large square components, in the sense that their non-edges are part of components of size at least $(\log n)^4$ in $T_1(\Gamma)$; this is the content of~\cite[Lemma 6.2]{behrstock2022square}.

Supercriticality for the process follows from the fact that as long as the number of discovered vertices $D_t$ is small, the  number $\vert A_{t+1}\setminus A_t\vert$ of active pairs discovered at each time step of the process (i.e.\ the offspring distribution) stochastically dominates a random variable $X'\sim \binom{Z'+2}{2}-1$, where $Z'\sim \mathrm{Binom}(n-o(n), p^2 -o(p^2))$. For $p=\lambda/\sqrt{n}$ and $\lambda=\theta(1)$, we have $\mathbb{E}X'= \frac{\lambda^4}{2}+2\lambda^2 +o(1)$, which for fixed $\lambda>\sqrt{\sqrt{6}-2}$ is   at least $1+\eta$ for some fixed $\eta>0$. For such $\lambda$, we can thus apply standard results from branching process theory to conclude that the extinction probability is at most $1-\delta$, for some fixed $\delta>0$, i.e.\ that with probability bounded away from zero our exploration process ends with a $\mathrm{LARGE\ STOP}$.

Having shown that a.a.s.\ a strictly positive proportion of squares
lie in large square components, it easily followed that a.a.s.\ a
positive proportion of non-edges of $\Gamma$ belong to large
square-components of $T_1(\Gamma)$ (see \cite[Corollary
6.3]{behrstock2022square}).  With this fact in hand, a.a.s.\ thickness
of order $1$ for $\Gamma$ was proved in~\cite[Section
6.3-6.5]{behrstock2022square} via a somewhat elaborate vertex
sprinkling argument relying on Janson's inequality and partition
arguments.  That part of the proof, however, only required
$p(n)=\Omega(1/\sqrt{n})$ together with the aforementioned fact that
a.a.s.\ $\Omega(n^2)$ non-edges of $\Gamma$ belong to square
components of order at least $(\log n)^4$ in $T_1(\Gamma)$.

To prove Theorem~\ref{theorem: threshold for thickness of order 2
below that for order 1}, it thus suffices to show that there exists
some absolute constant $c>0$ such that for $\sqrt{\sqrt{6}-2}-c\leq
\lambda \leq 5\sqrt{\log n}$, a.a.s.\ a strictly positive proportion
of non-edges of $\Gamma$ lie in thick of order $2$ components of size
at least $(\log n)^4$.  As we observe below, this can be done with a
modification of the exploration algorithm that increases the expected
number of offspring and thus allows the exploration process to remain
supercritical a little below $p=\sqrt{\sqrt{6}-2}/\sqrt{n}$.  As the
technical modification of the analysis from~\cite{behrstock2022square}
is fairly straightforward, and yields an upper bound on the threshold for thickness of order $2$ which we do not believe is optimal, 
we only sketch the argument and leave the details to the reader.

Given an active pair $x_1y_1 \in A_t$, observe that if there is a
quadruple of vertices $\{x_2, x_3, y_2, y_3\}\subseteq D_t$ such that
setting $X=\{x_1,x_2, x_3\}$ and $Y=\{y_1, y_2, y_3\}$ we have that
the subgraph of $\Gamma$ induced by $X\sqcup Y$ is the complete
bipartite graph on $X\sqcup Y$ with the edge $x_1y_1$ removed, then
the pairs $x_ix_j$ and $y_iy_j$ all lie in the same component of
$T_2(\Gamma)$ as $x_1y_1$ (since $\Gamma[X\sqcup Y]$ is thick of order
$1$ and has $x_1y_1$ as a member of its latch-set but not as the
diagonal of an induced square), and may thus be added to $A_{t+1}$ if
we are looking to explore $T_2(\Gamma)$ rather than $T_1(\Gamma)$.
This leads us to make the following modification of Step 3:

\begin{enumerate}[$3'$]
	\item \textbf{Otherwise}, we select the first active pair $x_1y_1\in A_t$, which by construction induces a $C_4$ in $\Gamma$ with some pair $F_t\in A_t\cup R_t$.   For every undiscovered vertex $z\in V(\Gamma)\setminus D_t$, we test whether or not $z$ sends an edge of $\Gamma$  to both  $x_1$ and $y_1$; in this way we form a set $Z_t:=\{z\in V(\Gamma): \  \{x_1z, y_1z\} \subset E(\Gamma)\}$.

	Next, for every pair of pairs $\{x_2x_3, y_2y_3\}$ drawn from  
	$V(\Gamma)\setminus\left( D_t\cup Z_t\right)$, we test whether we have $x_iy_j \in E(\Gamma)$ for all $(i,j)\neq (1,1)$ and $x_2x_3, y_2y_3'\notin E(\Gamma)$ both holding; if this is the case, we say $x_2x_3$ and $y_2y_3$ are bridge pairs, and we denote by $B'_t$ the collection of all such bridge pairs.

	Finally we update our triple $(D_t, A_t, R_t)$ by setting
	$D_{t+1}=D_t\cup Z_t$, $A_{t+1}= \left(A_t\setminus
	\{x_1y_1\}\right)\cup \left( (F_t\cup Z_t)^{(2)}\setminus
\left({F_t}^{(2)}\cup E(\Gamma)\right)\right) \cup B'_t$ and $R_{t+1}=R_t\cup \{x_1y_1\}$.
\end{enumerate}	

The arguments of~\cite{behrstock2022square} are readily adapted to
show that $\mathbb{E}B'_t\geq
\frac{1}{2}\left(\binom{n-o(n)}{2}\right)^2 p^8(1-p)^6 $ and that as a
result the expected number of offspring in our modified exploration
process when $p=\lambda/\sqrt{n}$ and $\lambda=\theta(1)$ is
$\frac{\lambda^4}{2}+2\lambda^2 + \frac{\lambda^8}{8}+o(1)$.  In
particular, there exist constants $c, \eta>0$ such that for
$\sqrt{\sqrt{6}-2}-c\leq \lambda \leq 2 \sqrt{\sqrt{6}-2}$ the
expected number of offspring in our modified process is at least
$1+\eta$, whereupon the rest of the machinery
from~\cite{behrstock2022square} can be deployed essentially without
any further alterations to ensure the a.a.s.\ existence of a strictly
positive proportion of non-edges in large components of $T_2(\Gamma)$,
and, from there, a.a.s.\ thickness of order $2$.
Theorem~\ref{theorem: threshold for thickness of order 2 below that
for order 1} follows.

\section*{Acknowledgements}
The authors would like to thank Svante Janson for a helpful conversation regarding Theorem 1.2, and an anonymous referee for their careful work and for their comments that improved the clarity and correctness of the paper. The initial research that led to this paper was carried out when the
first author visited the second and third author in Ume{\aa} in January
2023 on the occasion of the fourth Midwinter Meeting in Discrete
Probability.  The hospitality of the Ume{\aa} mathematics department
as well as the financial support of the Simons Foundation which made
this visit possible are gratefully acknowledged.  In addition the
research of the first author was supported by the Simons Foundation 
as a Simons Fellow while the research of the second and
third authors is supported by the Swedish Research Council grant VR
2021-03687.

\end{document}